\definecolor{added}{rgb}{0, 0, 1}
\definecolor{deleted}{rgb}{1, 0, 0}
\newtheorem{theorem}{Theorem}[section]
\newtheorem{lemma}[theorem]{Lemma}
\newtheorem{remark}[theorem]{Remark}
\newtheorem{definition}[theorem]{Definition}
\newtheorem{corollary}[theorem]{Corollary}
\newcommand{\sect}[1]{\section{#1} \setcounter{equation}{0} }
\newcounter{ca}
\newcommand{\norm}[2]{\left\|#1\right\|_{#2}}
\newcommand{\Pn}{\mathbb P_n}
 \newcommand{\ec}{\end{comment}}
\newcommand{\bc}{ \begin{comment}
 }
\newcommand{\andd}{\quad\mbox{\rm and}\quad}
\newcommand\e{{\varepsilon}}
\newcommand\w{{\omega}}
\def\be  {\begin{equation}}
\def\ee  {\end{equation}}
\def\ba  {\begin{eqnarray}}
\def\ea  {\end{eqnarray}}
\def\baa {\begin{eqnarray*}}
\def\eaa {\end{eqnarray*}}
\newenvironment{comment}[2]
{\bgroup\vspace{7pt}
\begin{tabular}{|p{5in}|}
\hline \qquad \bf \footnotesize Comment -- to be deleted in the final version \\
\hline
\quad\sl\footnotesize #1#2} {\\ \hline \end{tabular}
\vspace{7pt}\indent\egroup}
\def\updots{\mathinner{\mkern
1mu\raise 1pt \hbox{.}\mkern 2mu \mkern 2mu \raise
4pt\hbox{.}\mkern 1mu \raise 7pt\vbox {\kern 7 pt\hbox{.}}} }
\newcommand{\B}{\mathbb B}
\newcommand{\R}{\mathbb R}
\newcommand{\N}{\mathbb N}
\renewcommand{\a}{\alpha}
\renewcommand{\b}{\beta}
\newcommand{\ineq}[1]{{\rm(\ref{#1})}}
\newcommand{\ie}{{\em i.e., }}
\newcommand{\eg}{{\em e.g. }}
\newcommand{\bpic}{
\begin{center}
}
\newcommand{\epic}{
\endpspicture
\end{center}
}
\newcommand{\st}{\;\; \big| \;\;}
\renewcommand{\L}{L}
\newcommand{\Lp}{\L_p}
\newcommand{\Poly}{\mathbb P}
\newcommand{\wkr}{\w_{k,r}^\varphi}
\newcommand{\wkrav}{\w_{k,r}^{*\varphi}}
 \newcommand{\Dom}{{\mathfrak{D}}}
\newcommand{\ddelta}{\mu(\delta)}
 \newcommand{\wt}{{\mathcal{W}}}
\newcommand{\ccc}{{\mathfrak  \vartheta}}
\newcommand{\thm}[1]{Theorem~\ref{#1}}
\newcommand{\lem}[1]{Lemma~\ref{#1}}
\newcommand{\cor}[1]{Corollary~\ref{#1}}
 \newcommand{\Lpab}{\L_p^{\alpha,\beta}}
 \newcommand{\wab}{w_{\a,\b}}
 \newcommand{\weight}{\wt_{kh}^{r/2+\a,r/2+\b}}
\title{{\sc On weighted approximation with Jacobi weights}
\thanks{{\it AMS classification:} 41A10, 41A17, 41A25. {\it Keywords
and phrases:} Approximation by polynomials in weighted $L_p$-norms, Degree
of approximation, direct and inverse theorems, Jacobi weights, moduli of smoothness, characterization of smoothness classes. }}
\author{K.  Kopotun\thanks{Department of Mathematics, University of
Manitoba, Winnipeg, Manitoba, R3T 2N2, Canada ({\tt
kirill.kopotun@umanitoba.ca}). Supported by NSERC of Canada.} ,
D. Leviatan\thanks{Raymond and Beverly Sackler School of Mathematical
Sciences, Tel Aviv University, Tel Aviv 69978, Israel ({\tt
leviatan@post.tau.ac.il}).}\ \ and I. A. Shevchuk\thanks
{Faculty of Mechanics and Mathematics, National Taras
Shevchenko University of Kyiv, 01033 Kyiv, Ukraine ({\tt
shevchuk@univ.kiev.ua}).} }
\begin{document}

\maketitle

\abstract{
We obtain matching direct and inverse theorems  for the degree of weighted $\Lp$-approximation by polynomials with the Jacobi weights $(1-x)^\a(1+x)^\b$. Combined, the estimates yield a constructive characterization of various smoothness classes of functions via the degree of their approximation by algebraic polynomials.
 }

\sect{Introduction and main results}

In this paper, we are interested in weighted polynomial approximation with the Jacobi weights
\[
w_{\a,\b}(x):=(1-x)^\a(1+x)^\b  , \quad \a,\b \in
J_p
 := \begin{cases}
(-1/p, \infty), & \mbox{\rm if } 0<p<\infty , \\
[0,\infty), & \mbox{\rm if } p=\infty .
\end{cases}
\]
Let
$\Lpab (I):=\left\{f\mid \norm{\wab f}{\Lp(I)}<\infty \right\}$, where $\norm{\cdot}{\Lp (I)}$ is the usual $\Lp$ (quasi)norm on the interval
$I\subseteq [-1,1]$, and, for
 $f\in \Lpab(I)$,
denote by
\[
E_n(f,I)_{\a,\b,p}:=\inf_{p_n\in\Pn} \norm{ \wab(f-p_n)}{\Lp(I)}
\]
the error of best weighted approximation of $f$ by polynomials in $\Pn$, the set of algebraic polynomials of degree not more than  $n-1$.
For $I=[-1,1]$, we denote $\norm{\cdot}{p}:= \norm{\cdot}{\Lp[-1,1]}$, $\Lpab := \Lpab ([-1,1])$, $E_n(f)_{\a,\b,p} := E_n(f,[-1,1])_{\a,\b,p}$, etc.

\begin{definition}[\cite{sam}] For $r\in\N_0$ and  $0< p \le \infty$, denote $\B^0_p(\wab) := \Lpab$ and
\[
\B_p^r(\wab):=\left\{\,f\,|\,f^{(r-1)}\in AC_{loc}(-1,1)\quad\text{and}\quad\varphi^rf^{(r)}\in \Lpab \right\}, \quad r\ge 1,
\]
where $\varphi(x):= \sqrt{1-x^2}$ and $AC_{loc}(-1,1)$ denotes the set of  functions which are locally absolutely continuous in $(-1,1)$.
\end{definition}

 As is common when dealing with $L_p$ spaces, we will not distinguish between a function in $\B_p^r(\wab)$ and all functions which are equivalent to it in $\Lpab$.

\begin{definition}[\cite{sam}] \label{maindefinition}
For $k,r\in\N$ and
$f\in \B^r_p(w_{\a,\b})$, $0< p\le\infty$,  define
\be \label{wkrdefinition}
\wkr(f^{(r)},t)_{\a,\b,p}:=\sup_{0\leq h\leq t}\norm{ \weight(\cdot)
\Delta_{h\varphi(\cdot)}^k (f^{(r)},\cdot)}p,
\ee
where
\[
\wt_\delta^{\xi,\zeta} (x):=  (1-x-\delta\varphi(x)/2)^\xi
(1+x-\delta\varphi(x)/2)^\zeta,
\]
and
\[
\Delta_h^k(f,x):=\left\{
\begin{array}{ll}
\sum_{i=0}^k  \binom{k}{i}
(-1)^{k-i} f(x-\frac{kh}{2}+ih),&\mbox{\rm if }\, [x-\frac{kh}{2}, x+\frac{kh}{2}]  \subseteq [-1,1] \,,\\
0,&\mbox{\rm otherwise},
\end{array}\right.
\]
is the $k$th symmetric difference. 
\end{definition}

For $\delta>0$, denote (see \cite{kls})
\begin{align*}
\Dom_\delta &:= \left\{x\st1-\delta\varphi(x)/2\geq|x|
\right\}\setminus\{\pm1\}
  =[-1+\ddelta,1-\ddelta],
\end{align*}
where
\[
\ddelta:=2\delta^2/(4+\delta^2).
\]
We note that $\Dom_{\delta_1}\subset\Dom_{\delta_2}$ if $\delta_2<\delta_1\le2$, and that $\Dom_\delta=\emptyset$ if $\delta>2$. Also,
since ${\Delta}_{h\varphi(x)}^k(f,x)=0$   if $x\not\in\Dom_{kh}$,
\be\label{dom}
\wkr(f^{(r)},t)_{\a,\b,p}=\sup_{0<h\leq t}\norm{\weight(\cdot)
\Delta_{h\varphi(\cdot)}^k(f^{(r)},\cdot)}{L_p(\Dom_{kh})} .
\ee
In particular,
$\wkr(f^{(r)},t)_{\a,\b,p}=\wkr(f^{(r)},2/k)_{\a,\b,p}$, for all $t\geq 2/k$.

Following \cite{sam} we also define the weighted averaged moduli.
\begin{definition}[\cite{sam}]
For $k\in\N$, $r\in\N_0$ and  $f\in \B^r_p(w_{\a,\b})$, $0< p<\infty$,    the $k$th weighted averaged modulus of smoothness of $f$ is defined as
\[
\wkrav(f^{(r)},t)_{\a,\b,p}
:=\left(\frac1t\int_0^t\int_{\Dom_{k\tau}}
| \wt^{r/2+\a,r/2+\b}_{k\tau}(x)\Delta^k_{\tau\varphi(x)}(f^{(r)},x)|^p\,dx\,d\tau
\right)^{1/p} .
\]
If $p=\infty$ and   $f\in\B^r_\infty(w_{\a,\b})$, we write
\be\label{ineq1}
\wkrav(f^{(r)},t)_{\a,\b,\infty}:=\wkr(f^{(r)},t)_{\a,\b,\infty}\,.
\ee
\end{definition}
Clearly, 
\be\label{ineq}
\omega_{k,r}^{*\varphi}(f^{(r)},t)_{\a,\b,p}\le\wkr(f^{(r)},t)_{\a,\b,p} ,\quad t>0.
\ee

Moreover, it was proved in \cite{sam} that if $r/2+\a,r/2+\b\ge0$, then the weighted moduli and the weighted averaged moduli are equivalent.

\medskip

Throughout this paper, all constants $c$ may depend only on $k$, $r$, $p$, $\a$ and $\b$, unless a specific dependence on an additional parameter is mentioned.

\medskip

We have the following direct (Jackson-type) theorem.

\begin{theorem}\label{direct}
Let $k\in\N$,   $0< p\le\infty$, $ \a\geq 0$ and $\b\geq 0$. If $f\in\Lpab$, then
\be\label{dir}
E_n(f)_{\a,\b,p}\le c \w_{k,0}^\varphi (f,1/n)_{\a,\b,p}\,,\quad n\ge k.
\ee
\end{theorem}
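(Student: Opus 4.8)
The plan is to reduce the weighted Jackson estimate to a local, piecewise-polynomial construction adapted to the Chebyshev-type partition of $[-1,1]$, following the by-now-standard machinery for moduli with the $\varphi$-weighted step (as in the Ditzian–Totik theory and its refinements in \cite{kls, sam}). First I would introduce the symmetric partition with knots $x_j = \cos(j\pi/n)$, $j=0,\dots,n$, so that the $j$th interval $I_j := [x_j, x_{j-1}]$ has length $|I_j| \asymp \varphi(x_j)/n + 1/n^2$, and note that on $I_j$ away from the endpoints one has $\varphi(x) \asymp n|I_j|$, which is exactly the scale on which the difference $\Delta^k_{h\varphi(x)}(f,x)$ operates when $h \asymp 1/n$. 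On each $I_j$ I would choose a polynomial $q_j$ of degree $<k$ that is a near-best $\Lp$-approximation to $f$ on a slightly enlarged interval $\tilde I_j$ (the union of $I_j$ with a bounded number of neighbouring intervals), so that $\norm{\wab(f-q_j)}{\Lp(\tilde I_j)}$ is controlled by a local $K$-functional or, equivalently after a Whitney-type estimate, by the local modulus $\w_{k,0}^\varphi(f,1/n)_{\a,\b,p}$ measured over $\tilde I_j$.

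Next I would assemble these local pieces into a single polynomial $p_n \in \Pn$. This is the step where one cannot simply patch with a partition of unity (that would only give a spline, not a polynomial), so instead I would use the standard device of polynomial ``partition of unity'' — e.g. the normalized powers of the Chebyshev kernel or the Dzjadyk/Telyakovskii-type polynomial kernels $T_j$ with $T_j \approx 1$ on $I_j$, $T_j$ small off $\tilde I_j$ with controlled decay, and $\sum_j T_j \equiv 1$ — and set $p_n := \sum_j q_j T_j$, adjusting degrees so that $p_n$ has degree at most $cn$; a final rescaling $n \mapsto n/c$ absorbs the constant. Estimating $\norm{\wab(f-p_n)}{p}$ then splits, on each $I_j$, into the local error $\norm{\wab(f-q_j)}{\Lp(I_j)}$ plus the ``leakage'' terms $\sum_{i\ne j}\norm{\wab(q_i-q_j)T_i}{\Lp(I_j)}$; the differences $q_i - q_j$ on overlapping enlarged intervals are again bounded by local moduli via Whitney, and the rapid decay of the $T_i$ off their home intervals makes the resulting double sum converge after one sums the $p$th powers over $j$. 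Here the admissibility assumptions $\a,\b\ge 0$ and the precise choice of the weight $\wt^{r/2+\a,r/2+\b}_{kh}$ inside the modulus (which for $r=0$ is just $\wt^{\a,\b}_{kh}$, comparable to $\wab$ on $\Dom_{kh}$) are what let the discrete weights $\wab(x_j)$ be compared across neighbouring intervals with bounded constants.

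The main obstacle I anticipate is the behaviour near the endpoints $\pm1$: there $\varphi(x)\to 0$, the step of the difference shrinks, $\Delta^k_{h\varphi(x)}(f,x)$ degenerates to $0$ on the last $O(1)$ intervals (since $x\notin\Dom_{kh}$ there by \eqref{dom}), so the modulus carries \emph{no information} about $f$ on a boundary layer of length $\asymp 1/n^2$; one must instead control $\norm{\wab(f-p_n)}{\Lp(I_1\cup I_n)}$ purely by the best approximation of the already-constructed $q_j$'s together with the integrability of $\wab$ near $\pm1$ (which is where $\a,\b\in J_p$, i.e. $\a,\b>-1/p$, and here $\a,\b\ge0$, is used), plus a Whitney estimate on the first genuinely ``interior'' interval to link that boundary piece back to the global modulus. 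For $p=\infty$ this boundary argument is a pointwise estimate; for $0<p<1$ one additionally has to be careful to use the $p$-triangle inequality $\norm{\cdot}{p}^p$ additivity over the $I_j$ and the quasi-norm form of Whitney's inequality. Once the endpoint layer is handled, summing the $p$th powers of the local estimates over $j=1,\dots,n$ and using that each enlarged interval $\tilde I_j$ overlaps only a bounded number of others yields $E_n(f)_{\a,\b,p}^{\,p} \le c\,\w_{k,0}^\varphi(f,1/n)_{\a,\b,p}^{\,p}$ (with the obvious modification for $p=\infty$), which is \eqref{dir}.
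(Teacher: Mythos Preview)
Your plan is a valid route to \eqref{dir}, but it is very different from---and much heavier than---what the paper actually does. The paper's proof is essentially two lines: for $n\ge N$ (with $N$ depending only on $k,p,\a,\b$) the estimate is quoted as \lem{jacklemma}, which is \cite{sam}*{Corollary~4.4} with $r=0$; for $k\le n<N$ one invokes the Whitney-type \cor{whcor} with $\theta=1/N$ to get
\[
E_n(f)_{\a,\b,p}\le E_k(f)_{\a,\b,p}\le c\,\w_{k,0}^\varphi(f,1/N)_{\a,\b,p}\le c\,\w_{k,0}^\varphi(f,1/n)_{\a,\b,p}.
\]
So the ``hard'' Jackson step is not carried out here at all; it is imported from \cite{sam}, and the only new analytic content used in this proof is the global Whitney estimate \cor{whcor} (needed to cover the finitely many small $n$).

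What you outline---Chebyshev partition, local Whitney pieces $q_j\in\Poly_k$, polynomial partition of unity to glue---is essentially the construction behind results like \lem{jacklemma}; you are proposing to re-prove the cited lemma from scratch. That is legitimate and self-contained, and your identification of the endpoint layer as the main obstacle is correct (the paper's local Whitney \thm{thm:localwh} and its \cor{corwh} are exactly the tools that handle intervals of the form $[1-At^2,1]$). The trade-off is that your route requires carrying through the full kernel/decay bookkeeping, including the $0<p<1$ quasi-norm case, whereas the paper sidesteps all of that by citation and reserves its analytic effort for the Whitney estimates, which it needs anyway for \thm{whitneythm} and the $r\ge1$ case.
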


It follows from \cite{sam}*{Lemma 1.11} that, if $k\in\N$, $r\in\N_0$,   $r/2+\a\geq 0$, $r/2+\b\ge 0$,   $1\le p\le\infty$ and $f\in\B^{r+1}_p(\wab)$, then
\[
\w_{k+1,r}^\varphi (f^{(r)},t)_{\a,\b,p}\le c t \w_{k,r+1}^\varphi(f^{(r+1)},t)_{\a,\b,p}, \quad t>0.
\]
Hence, \ineq{dir} implies that, for $f\in B_p^r(\wab)$, $1\le p \leq \infty$,
\[
E_n(f)_{\a,\b,p}\le c \w_{k+r,0}^\varphi (f,1/n)_{\a,\b,p} \leq c t^r  \wkr (f^{(r)},1/n)_{\a,\b,p}   \,,\quad n\ge k+r,
\]
provided $\a,\b \geq 0$. We    strengthen this result by showing that the last estimate  is, in fact, valid for all $\a,\b \geq -r/2$. Namely,

\begin{theorem}\label{thm2direct}
Let $k\in\N$, $r\in\N_0$, $1\le p\le\infty$, and $\a,\b\in J_p$ be such that $r/2+\a\geq 0$ and $r/2+\b\geq 0$. If $f\in\B^r_p(w_{\a,\b})$, then
\be\label{thm2dir}
E_n(f)_{w_{\a,\b},p}\le\frac c{n^r}\wkr(f^{(r)},1/n)_{\a,\b,p}\,,\quad n\ge k+r.
\ee
\end{theorem}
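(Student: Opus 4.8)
The strategy is to reduce the weighted approximation statement for general admissible $\alpha,\beta$ (with $r/2+\alpha\geq 0$, $r/2+\beta\geq 0$) to the already-known case $\alpha,\beta\geq 0$, which follows from Theorem~\ref{direct} combined with \cite{sam}*{Lemma 1.11} as indicated in the excerpt. The only genuinely new content is therefore handling the range $-r/2\leq\alpha<0$ (and symmetrically for $\beta$), where the Jacobi weight $w_{\a,\b}$ has a singularity at $+1$ (resp. $-1$) but the ``shifted'' weight $\wt_{kh}^{r/2+\a,r/2+\b}$ appearing in $\wkr$ stays bounded since $r/2+\alpha\geq 0$. The key mechanism is that the factor $\varphi^r$ in the definition of $\B^r_p(w_{\a,\b})$ supplies, near the endpoint, exactly the extra vanishing $(1-x)^{r/2}$ needed to compensate the $(1-x)^{\alpha}$ blow-up when $\alpha\geq -r/2$.

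First I would split $[-1,1]$ into a central part $[-1+\mu(1/n),\,1-\mu(1/n)]=\Dom_{1/n}$ and two endpoint caps, say $[1-\mu(1/n),1]$ near $+1$, using a partition-of-unity / decomposition argument for the polynomial construction. On the central part all weights $(1\mp x)$ are comparable to $n^{-2}$-scale quantities uniformly, so $w_{\a,\b}$ is equivalent to a constant times $\wt_{1/n}^{r/2+\a,r/2+\b}$ times an appropriate power of $\varphi$, and the desired bound on $\Dom_{1/n}$ follows from the nonnegative-exponent case together with \ineq{dom}. Second, and this is the crux, on the endpoint cap $[1-\mu(1/n),1]$ I would bound $\norm{w_{\a,\b}(f-p_n)}{L_p[1-\mu(1/n),1]}$ by exploiting that $\varphi^r f^{(r)}\in\Lpab$: one writes the Taylor-type remainder of $f$ against its degree-$(r-1)$ polynomial piece and estimates the tail using a Hardy-type inequality in the weighted $L_p$ norm, after substituting a near-endpoint variable; the polynomial $p_n$ on this cap can be taken as a suitable near-best polynomial approximant (e.g. obtained from the central-part construction extended, or from a known local Jackson estimate on a shrinking interval near $+1$). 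The gain factor $n^{-r}$ comes out of the length $\mu(1/n)\asymp n^{-2}$ of the cap through the power $\varphi^r\asymp(1-x)^{r/2}\lesssim (\mu(1/n))^{r/2}\asymp n^{-r}$ available there, matched against the modulus $\wkr(f^{(r)},1/n)$.

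The technical device I expect to lean on is the construction, for $f\in\B^r_p(w_{\a,\b})$, of an auxiliary function $F$ (or a truncation/modification of $f$) that agrees with $f$ away from the endpoints, lies in a space with nonnegative Jacobi parameters, and whose $r$th derivative is controlled by $\varphi^r f^{(r)}$; then apply the known direct theorem to $F$ and estimate $\norm{w_{\a,\b}(f-F)}{p}$ directly near the endpoints via the absolute continuity of $f^{(r-1)}$ and the integrability of $\varphi^r f^{(r)}$. Summing the central estimate and the two endpoint estimates, and using that $\wkr$ is monotone and that the relevant moduli on $\Dom_{kh}$ control the pieces, yields \ineq{thm2dir}.

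**Main obstacle.**
The hard part will be the endpoint analysis: controlling $\norm{w_{\a,\b}(f-p_n)}{L_p[1-\mu(1/n),1]}$ when $\alpha$ is negative, because here the weight is singular and one cannot simply invoke polynomial inverse/Jackson estimates with a bounded weight. One must carefully track how the $\varphi^r$-gain in $\B^r_p$ interacts with the singular factor $(1-x)^\alpha$, typically through a weighted Hardy inequality whose validity requires precisely $r/2+\alpha\geq 0$ (and, for $p<\infty$ vs.\ $p=\infty$, slightly different bookkeeping), and one must ensure the constants remain independent of $n$. Matching the resulting bound against $n^{-r}\wkr(f^{(r)},1/n)_{\a,\b,p}$ rather than a cruder quantity — i.e.\ not losing a logarithm or an extra power of $n$ in the Hardy step — is the delicate point; the choice of the cutoff scale $\mu(1/n)$ and of the local polynomial on the cap has to be made so that the truncation error and the approximation error on the cap are both $O(n^{-r}\wkr(f^{(r)},1/n)_{\a,\b,p})$ simultaneously.
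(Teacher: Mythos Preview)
Your plan takes a genuinely different route from the paper. The paper's proof is essentially three lines: for $n\ge n_0$ it invokes the Ditzian--Totik weighted Jackson theorem \cite{dt}*{Theorem~8.2.1} and the inequality \cite{dt}*{(6.3.2)} to obtain
\[
E_n(f)_{w_{\a,\b},p}\le c\int_0^{1/n}\frac{\Omega_\varphi^{k+r}(f,t)_{w_{\a,\b},p}}{t}\,dt\le c\int_0^{1/n}t^{r-1}\Omega_\varphi^k(f^{(r)},t)_{w_{\a,\b}\varphi^r,p}\,dt\le\frac{c}{n^r}\omega_\varphi^k(f^{(r)},1/n)_{w_{\a,\b}\varphi^r,p},
\]
and then uses the equivalence $\omega_\varphi^k(f^{(r)},t)_{w_{\a,\b}\varphi^r,p}\sim\wkr(f^{(r)},t)_{\a,\b,p}$ from \ineq{claim}. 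The point you miss is that the DT results already cover \emph{all} $\a,\b\in J_p$, so there is no need to reduce to nonnegative exponents; the identity $w_{\a,\b}\varphi^r=w_{r/2+\a,\,r/2+\b}$ is absorbed into the DT modulus from the outset. For small $n$ the paper invokes the Whitney-type Theorem~\ref{whitneythm}, whose proof contains exactly the Taylor-remainder/H\"older computation (with the case split on the sign of $q(r/2-\a-1)+1$) that plays the role of your proposed endpoint Hardy step.

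Your decomposition approach could in principle be made to work, but it has a structural gap as written: the approximant must be a \emph{single} polynomial $p_n\in\Pn$, and you never explain how the polynomial produced on the central region (via the nonnegative-exponent case) is shown to satisfy the endpoint-cap bound. The Taylor/Hardy argument you sketch estimates $\norm{w_{\a,\b}(f-q)}{L_p[1-\mu(1/n),1]}$ for a local Taylor polynomial $q$, not for the global $p_n$; bridging $q$ to $p_n$ on the cap requires a polynomial inequality (of Remez/Markov--Bernstein type in the weighted norm) that you have not supplied, and it is precisely there that one can lose the sharp factor $n^{-r}$. The paper sidesteps this gluing issue entirely by quoting global DT estimates for $n\ge n_0$ and handling $k+r\le n<n_0$ by a single global Whitney estimate.
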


We remark that \thm{thm2direct} is not valid if $r\geq 1$ and $0<p<1$ (one can show this using the same construction that was used in the proof of \cite{k-at8}*{Theorem 3 and Corollary 4}).

Next, we have the following inverse result in the case $1\le p \le \infty$.
\begin{theorem} \label{inverse}
Suppose that $r\in\N_0$, $1\le p\le \infty$, $\a,\b\in J_p$ are such that $r/2+\a\geq 0$ and $r/2+\b\geq 0$, and $f\in \Lpab$.
If
\be\label{rcond}
\sum_{n=1}^\infty rn^{r-1}E_n(f)_{\wab,p}<+\infty
\ee
$($\ie if $r=0$ then this condition is vacuous$)$,
then 
$f\in\B^r_p(w_{\a,\b})$, and for $k\in\N$ and $N\in\N$,
\begin{eqnarray}\label{inverse1}
\wkr( f^{(r)},t)_{\a,\b,p}&\le&
c \sum_{n>\max\{N,1/t\}}rn^{r-1}E_n(f)_{w_{\a,\b},p}\\ \nonumber
&&\quad+ct^{k}\sum_{N\leq n\leq\max\{N,1/t\}}n^{k+r-1}E_n(f)_{w_{\a,\b},p}
\\&&\quad+c(N)t^{k}E_{k+r}(f)_{w_{\a,\b},p}\,,\quad t>0.\nonumber
\end{eqnarray}
In particular, if $N\le k+r$, then
\begin{eqnarray*} 
\wkr( f^{(r)},t)_{\a,\b,p}&\le&
c\sum_{n >\max\{N,1/t\}}rn^{r-1}E_n(f)_{w_{\a,\b},p}\\ \nonumber
&& +ct^{k}\sum_{N\leq n\leq\max\{N,1/t \}}n^{k+r-1}E_n(f)_{w_{\a,\b},p} , \; t>0.
\end{eqnarray*}
\end{theorem}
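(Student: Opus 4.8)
\medskip\noindent\emph{Proof strategy.} The plan is to run the classical telescoping (dyadic) argument, after isolating two auxiliary estimates for the weighted modulus. The first is a \emph{crude bound}: for every $g\in\B^r_p(\wab)$ and all $t>0$,
\[
\wkr(g^{(r)},t)_{\a,\b,p}\le c\norm{\varphi^r g^{(r)}\wab}{p}.
\]
I would prove this directly from \ineq{dom}: bound $\norm{\weight(\cdot)\Delta^k_{h\varphi(\cdot)}(g^{(r)},\cdot)}{L_p(\Dom_{kh})}$ by the sum of the $k+1$ terms coming from the definition of $\Delta^k$, and in the $i$-th term substitute $y=x+(i-\tfrac k2)h\varphi(x)$; on $\Dom_{kh}$ this substitution is increasing with derivative in $[\tfrac12,\tfrac32]$, and since $r/2+\a\ge0$ and $r/2+\b\ge0$ one has $\weight(x)\le w_{r/2+\a,r/2+\b}(y)=\varphi^r(y)\wab(y)$, which gives the bound. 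The second estimate is a \emph{polynomial bound}: for $Q\in\Poly_n$,
\[
\wkr(Q^{(r)},t)_{\a,\b,p}\le c(nt)^k\norm{\varphi^r Q^{(r)}\wab}{p}\le c(nt)^k n^r\norm{\wab Q}{p},
\]
and moreover $\wkr(Q^{(r)},t)_{\a,\b,p}=0$ if $\deg Q\le k+r-1$ (then $Q^{(r)}$ has degree $\le k-1$, so $\Delta^k_{h\varphi(\cdot)}(Q^{(r)},\cdot)\equiv0$). I would obtain the first inequality by iterating \cite{sam}*{Lemma 1.11} together with the elementary bound $\w_{1,\rho}^\varphi(g^{(\rho)},t)_{\a,\b,p}\le ct\norm{\varphi^{\rho+1}g^{(\rho+1)}\wab}{p}$ to reach $\wkr(Q^{(r)},t)_{\a,\b,p}\le ct^k\norm{\varphi^{r+k}Q^{(r+k)}\wab}{p}$, and then iterating the weighted Markov--Bernstein inequality $\norm{\varphi^{j+1}Q^{(j+1)}\wab}{p}\le cn\norm{\varphi^jQ^{(j)}\wab}{p}$ (valid for $\a,\b\in J_p$).

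With these in hand, put $\mu_i:=2^i(k+r)$ and fix near-best approximants $P_{\mu_i}\in\Poly_{\mu_i}$ with $\norm{\wab(f-P_{\mu_i})}{p}\le cE_{\mu_i}(f)_{\a,\b,p}$, and set $D_i:=P_{\mu_{i+1}}-P_{\mu_i}$; then $\norm{\wab D_i}{p}\le cE_{\mu_i}(f)_{\a,\b,p}$ and, by Bernstein, $\norm{\varphi^r D_i^{(r)}\wab}{p}\le c\mu_i^rE_{\mu_i}(f)_{\a,\b,p}$. Since $E_n(f)_{\a,\b,p}$ is nonincreasing, comparing the dyadic sum with $\sum_n rn^{r-1}E_n$ and invoking \ineq{rcond} gives $\sum_{i\ge0}\mu_i^rE_{\mu_i}(f)_{\a,\b,p}<\infty$ (for $r=0$ the assertion $f\in\B^0_p(\wab)$ is trivial). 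Hence the telescoping series $P_{k+r}+\sum_{i\ge0}D_i$ converges to $f$ in $\Lpab$, its termwise $r$-th derivative converges in the seminorm $\norm{\varphi^r(\cdot)\wab}{p}$, and — using the classical Markov inequality on each $[-1+\e,1-\e]$ to handle the lower-order derivatives — the usual argument yields $f^{(r-1)}\in\AC_{\loc}(-1,1)$ and $\varphi^r f^{(r)}\in\Lpab$, i.e. $f\in\B^r_p(\wab)$.

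For \ineq{inverse1}, fix $t>0$ and $N\in\N$; I may assume $t\le2/k$, since otherwise $\wkr(f^{(r)},t)_{\a,\b,p}=\wkr(f^{(r)},2/k)_{\a,\b,p}$ and the estimate follows from the crude bound together with $t^{-k}\le c$. Let $m\ge0$ be minimal with $\mu_m\ge\max\{N,k+r\}$ (so $m=0$ when $N\le k+r$) and write $f^{(r)}=P_{\mu_m}^{(r)}+\sum_{i\ge m}D_i^{(r)}$. Subadditivity of $\wkr$, with the crude bound controlling the tails, gives
\[
\wkr(f^{(r)},t)_{\a,\b,p}\le\wkr(P_{\mu_m}^{(r)},t)_{\a,\b,p}+\sum_{i\ge m}\wkr(D_i^{(r)},t)_{\a,\b,p}.
\]
Writing $P_{\mu_m}=P_{k+r}+(P_{\mu_m}-P_{k+r})$, the $P_{k+r}$ part contributes $0$, and the polynomial bound with $\mu_m\le2\max\{N,k+r\}$ and $E_{\mu_m}\le E_{k+r}$ gives $\wkr(P_{\mu_m}^{(r)},t)_{\a,\b,p}\le c(N)t^kE_{k+r}(f)_{\a,\b,p}$ (this term is simply $0$ when $N\le k+r$, since then $P_{\mu_m}=P_{k+r}$). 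In the series, let $i^*$ be determined by $\mu_{i^*}\le1/t<\mu_{i^*+1}$: for $i>i^*$ use the crude bound, $\wkr(D_i^{(r)},t)_{\a,\b,p}\le c\mu_i^rE_{\mu_i}(f)_{\a,\b,p}$, and for $m\le i\le i^*$ use the polynomial bound, $\wkr(D_i^{(r)},t)_{\a,\b,p}\le ct^k\mu_i^{k+r}E_{\mu_i}(f)_{\a,\b,p}$. Converting these two dyadic sums into sums over all $n$ by comparison with $\sum_n n^{r-1}E_n$ and $\sum_n n^{k+r-1}E_n$ (monotonicity of $E_n$), and absorbing the finitely many remaining low-degree terms ($n\lesssim\max\{N,k+r\}$) and boundary terms ($\mu_{i^*}\le n\le 1/t$) into $c(N)t^kE_{k+r}(f)_{\a,\b,p}$ — using in addition $t^{-1}\le c(N)$ when $1/t\lesssim N$ — yields \ineq{inverse1}; the ``In particular'' statement is the case $N\le k+r$, where the base term vanishes.

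The step I expect to be the main obstacle is the bookkeeping in the last paragraph: fitting the dyadic sums produced by the crude/polynomial split to the precise expressions $\sum_{n>\max\{N,1/t\}}rn^{r-1}E_n(f)_{\a,\b,p}$ and $t^k\sum_{N\le n\le\max\{N,1/t\}}n^{k+r-1}E_n(f)_{\a,\b,p}$, and checking that every leftover finite piece (the low-degree terms $n\lesssim\max\{N,k+r\}$ and the terms with $\mu_{i^*}\le n\le 1/t$) collapses into the single term $c(N)t^kE_{k+r}(f)_{\a,\b,p}$. A secondary delicate point is the crude bound, which genuinely uses the hypotheses $r/2+\a\ge0$ and $r/2+\b\ge0$ through the comparison $\weight(x)\le\varphi^r(y)\wab(y)$; these are precisely the conditions that make the whole scheme go through.
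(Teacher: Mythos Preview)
Your approach is essentially the paper's: telescoping along a dyadic scale, the crude bound $\wkr(g^{(r)},t)_{\a,\b,p}\le c\norm{\wab\varphi^r g^{(r)}}{p}$ for the far tail, the polynomial bound $\wkr(Q^{(r)},t)_{\a,\b,p}\le ct^k\norm{\wab\varphi^{r+k}Q^{(r+k)}}{p}$ for the near terms, and the weighted Markov--Bernstein inequality \ineq{pt} to convert to $E_n$'s. The only cosmetic differences are that the paper telescopes along $2^jN$ rather than $2^i(k+r)$, and that it simply quotes \cite{sam}*{Lemma 4.1} for both auxiliary estimates rather than deriving them as you do. For $r\ge1$ your sketch is correct; the bookkeeping you worry about is exactly what the paper carries out via the elementary dyadic-to-integer comparison \ineq{twoj}.

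There is, however, a genuine gap when $r=0$. In that case the first sum on the right of \ineq{inverse1} is identically $0$, so your tail estimate $\sum_{i>i^*}\w_{k,0}^\varphi(D_i,t)_{\a,\b,p}\le c\sum_{i>i^*}E_{\mu_i}(f)_{\wab,p}$ leads nowhere: without \ineq{rcond} you have no control on $\sum_i E_{\mu_i}(f)_{\wab,p}$ (only $E_n\to0$), and this sum cannot be compared with the two remaining terms of \ineq{inverse1}. The fix---and this is precisely how the paper handles $r=0$---is to replace the infinite telescoping by the \emph{finite} decomposition
\[
f=P_k+(P_N-P_k)+\sum_{j=0}^J Q_j+\bigl(f-P_{2^{J+1}N}\bigr),
\]
with $J$ chosen so that $2^{J-1}N<1/t\le 2^JN$, and to estimate the last piece directly via the crude bound: $\w_{k,0}^\varphi(f-P_{2^{J+1}N},t)_{\a,\b,p}\le c\norm{\wab(f-P_{2^{J+1}N})}{p}\le cE_{2^{J+1}N}(f)_{\wab,p}$. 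This is a \emph{single} best-approximation error at a scale comparable to $1/t$, and it is then absorbed into $ct^k\sum_{N\le n\le 1/t}n^{k-1}E_n(f)_{\wab,p}$ by averaging over a dyadic block (or into $c(N)t^kE_k(f)_{\wab,p}$ when $1/t\lesssim N$). Everything else in your outline goes through unchanged.
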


\begin{remark}\label{rem}{\rm(i)} Note that the first term in \ineq{inverse1} disappears if $r=0$.

\noindent{\rm(ii)} If $\a=\b=0$, Theorem $\ref{inverse}$ was proved in \cite{kls}.

\noindent{\rm(iii)} The case $\a,\b\ge0$, $N=1$ and $r=0$ follows from \cite{dt}*{Theorem 8.2.4} by virtue of \ineq{claim}.
\end{remark}

Denote by $\Phi$  the set of nondecreasing functions
$\phi:[0,\infty)\rightarrow[0,\infty)$, satisfying $\lim_{t\to 0^+}\phi(t)=0$. The following is an immediate corollary of \thm{inverse} (in fact, it is a restatement of \thm{inverse} in terms of $\phi$).

\begin{corollary} \label{cor212}
Suppose that $r\in\N_0$, $N\in\N$, $1\le p\le \infty$, $\a,\b\in J_p$ are such that $r/2+\a\geq 0$ and $r/2+\b\geq 0$, and $\phi\in\Phi$ is such that
\[
 \int_0^1 \frac{r \phi (u)}{u^{r +1}}du<+\infty
\]
$($\ie if $r=0$ then this condition is vacuous$)$.
Then, if $f\in \Lpab$ is such that
\[
E_n(f)_{\wab,p} \le\phi\left(\frac1{n+1}\right),\quad \mbox{for all} \quad n\ge N,
\]
then
$f\in\B^r_p(w_{\a,\b})$, and for $k\in\N$ and $0<t\leq 1/2$,
\[
\wkr( f^{(r)},t)_{\a,\b,p} \le
c\int_0^t\frac{r\phi(u)}{u^{r+1}}du
+ ct^{k}\int_t^1\frac{\phi(u)}{u^{k+r+1}}du
 +c(N)t^{k}E_{k+r}(f)_{w_{\a,\b},p}. \nonumber
\]
In particular, if $N\le k+r$, then
\[
\wkr( f^{(r)},t)_{\a,\b,p} \le
c\int_0^t\frac{r\phi(u)}{u^{r+1}}du
+ ct^{k}\int_t^1\frac{\phi(u)}{u^{k+r+1}}du.
\]

\end{corollary}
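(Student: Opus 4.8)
The plan is to derive Corollary~\ref{cor212} directly from \thm{inverse} by bounding the three sums in \ineq{inverse1} by the corresponding integrals. Since $\phi\in\Phi$ is nondecreasing and $E_n(f)_{\wab,p}\le\phi(1/(n+1))$ for $n\ge N$, we can pass from sums over $n$ to integrals by comparing $n^{s-1}\phi(1/(n+1))$ with $\int_{n}^{n+1} u^{s-1}\phi(1/u)\,du$ (up to constants depending only on $s$, hence only on $k,r$), and then substituting $u\mapsto 1/u$.

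First I would treat the first term: for $r\ge1$,
\[
\sum_{n>\max\{N,1/t\}} r n^{r-1}E_n(f)_{\wab,p}
\le c\sum_{n>1/t} r n^{r-1}\phi(1/(n+1))
\le c\int_{1/t}^\infty r u^{r-1}\phi(1/u)\,du
= c\int_0^{t} \frac{r\phi(v)}{v^{r+1}}\,dv,
\]
where the substitution $v=1/u$ gives $du = -v^{-2}\,dv$ and $u^{r-1}=v^{1-r}$; the hypothesis $\int_0^1 r\phi(u)u^{-r-1}\,du<\infty$ guarantees convergence. (When $r=0$ this term is absent, matching Remark~\ref{rem}(i).) Next, for the second term, since $\max\{N,1/t\}$ may exceed $1/t$ only when $N>1/t$, i.e. $t>1/N$, and for $0<t\le1/2$ with $N\le k+r$ this range is handled trivially, I split according to whether $1/t\ge N$ or not; in the main range $1/t\ge N$ we get
\[
t^k\sum_{N\le n\le 1/t} n^{k+r-1}E_n(f)_{\wab,p}
\le c\,t^k\sum_{n\le 1/t} n^{k+r-1}\phi(1/(n+1))
\le c\,t^k\int_1^{1/t} u^{k+r-1}\phi(1/u)\,du
= c\,t^k\int_t^1 \frac{\phi(v)}{v^{k+r+1}}\,dv,
\]
again by $v=1/u$. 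The third term $c(N)t^kE_{k+r}(f)_{\wab,p}$ is carried over verbatim, and disappears when $N\le k+r$ (since then $E_{k+r}(f)_{\wab,p}\le\phi(1/(k+r+1))$ is absorbed into the second integral, or one simply invokes the ``in particular'' clause of \thm{inverse}).

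The only genuinely delicate point is making the sum-to-integral comparison uniform: one must check that the edge terms (near $n=N$, near $n\approx1/t$, and the shift from $\phi(1/(n+1))$ to $\phi(1/u)$ on $[n,n+1]$) contribute only constants depending on $k,r$ — this uses monotonicity of $\phi$ and of $u\mapsto u^{s-1}$, plus the elementary inequality $\phi(1/(n+1))\le\phi(1/u)$ for $u\le n+1$. I also need $\phi$ nondecreasing to ensure $\int_0^t r\phi(u)u^{-r-1}\,du$ dominates the tail sum's lower-order corrections. The constant $c(N)$ in the conclusion absorbs any finitely many terms with $n<N$ that were implicitly added when extending sums; since $N$ is fixed this is harmless, and the claimed displays follow. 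Hence the corollary is simply \thm{inverse} with sums replaced by integrals, and I expect no real obstacle beyond bookkeeping the comparison constants.
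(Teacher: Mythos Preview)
Your proposal is correct and follows the same approach as the paper: the paper provides no separate proof, stating only that the corollary ``is an immediate corollary of \thm{inverse} (in fact, it is a restatement of \thm{inverse} in terms of $\phi$)''. Your sum-to-integral comparison, together with the verification that the hypothesis $\int_0^1 r\phi(u)u^{-r-1}\,du<\infty$ yields condition~\ineq{rcond}, is precisely the routine bookkeeping the paper leaves to the reader.
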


\begin{remark}
We take this opportunity to correct an inadvertent misprint in three of our earlier papers where the inverse theorems of this type were proved in the case $\a=\b=0$. Namely, the inequality $E_n(f)_{p} \le\phi\left(1/n\right)$ in \cite{kls-umzh}*{Theorem 3.2} $($the case $p=\infty)$, and in
\cite{kls}*{Theorem 9.1} and \cite{kls1}*{Theorem $I_r$} $($the case $1\le p\le\infty)$,
should be replaced by $E_n(f)_{p} \le\phi\left(1/(n+1)\right)$. Otherwise, the last estimates in these results are not justified/valid if $N=1$ , $k=1$ and $r=0$ since $E_{k+r}(f)_p = E_1(f)_p \le \phi\left(1\right)$ cannot be estimated above by $\int_t^1 \phi(u) u^{-2} du$ without any extra assumptions on the function $\phi$.
\end{remark}

It immediately follows from   \thm{direct} that if $\a,\b\in J_p$, $r/2+\a\geq 0$, $r/2+\b\geq 0$ and
 $\wkr(f^{(r)},t)_{\a,\b,p}\le t^\gamma$,   then $E_n(f)_{w_{\a,\b},p}\le cn^{-r-\gamma}$.
Conversely, an immediate consequence of \thm{inverse} (\cor{cor212}) is the following result which, for $\a,\b\ge0$, was proved by a different method in \cite{kls1}*{Theorem 5.3}.

\begin{corollary}
Suppose that $r\in\N_0$, $N\in\N$, $1\le p\le \infty$, and $\a,\b\in J_p$ are such that $r/2+\a\geq 0$ and $r/2+\b\geq 0$.
If $f\in \Lpab$ is such that, for some $N\in\N$ and $r< \gamma <k+r$,
\be\label{Aleph}
E_n(f)_{\wab,p}\le n^{-\gamma},\quad n\ge N,
\ee
then
$f\in\B^r_p(\wab)$,  and
\[
\wkr(f^{(r)},t)_{\a,\b,p}\le
c t^{\gamma-r}+c(N)t^k E_{k+r}(f)_{w_{\a,\b},p},\quad t>0.
\]
In particular, if $N\le k+r$, then
\[
\wkr(f^{(r)},t)_{\a,\b,p}\le c t^{\gamma-r},\quad t>0.
\]
\end{corollary}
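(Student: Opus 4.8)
The plan is to deduce this corollary directly from \cor{cor212} by choosing the function $\phi$ appropriately, exactly as the corollary preceding this statement is a mere restatement of \thm{inverse}. First I would set $\phi(u):=u^\gamma$ for $u\in[0,1]$ (and, say, $\phi(u):=1$ for $u>1$, so that $\phi\in\Phi$; the values for $u>1$ are irrelevant here). Then the hypothesis \ineq{Aleph} says precisely that $E_n(f)_{\wab,p}\le n^{-\gamma}\le (n+1)^{-\gamma}\cdot\bigl((n+1)/n\bigr)^{\gamma}\le 2^\gamma\phi\bigl(1/(n+1)\bigr)$ for $n\ge N$; absorbing the harmless constant $2^\gamma$ (which depends only on $\gamma$, hence on the allowed parameters) into the constant $c$, we are in the situation of \cor{cor212}, provided the integrability condition $\int_0^1 r\phi(u)u^{-r-1}\,du<\infty$ holds.

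The next step is to verify this integrability condition and to evaluate the two integrals appearing in the conclusion of \cor{cor212} with $\phi(u)=u^\gamma$. Since $\gamma>r$, we have $\int_0^1 r u^{\gamma}u^{-r-1}\,du = r\int_0^1 u^{\gamma-r-1}\,du = r/(\gamma-r)<\infty$ (and this is vacuous when $r=0$), so the hypotheses of \cor{cor212} are met. For the first term in the conclusion, $\int_0^t r u^{\gamma-r-1}\,du = \frac{r}{\gamma-r}\,t^{\gamma-r}\le c\,t^{\gamma-r}$. For the second term, since $\gamma<k+r$ we get $t^k\int_t^1 u^{\gamma-k-r-1}\,du \le t^k\cdot\frac{1}{k+r-\gamma}\,t^{\gamma-k-r} = \frac{1}{k+r-\gamma}\,t^{\gamma-r}\le c\,t^{\gamma-r}$ for $0<t\le 1$ (and for $t>1$ the modulus is bounded by its value at $t=2/k$ by the remark following \ineq{dom}, so the estimate persists for all $t>0$ after adjusting $c$). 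Adding the two bounds gives $\wkr(f^{(r)},t)_{\a,\b,p}\le c\,t^{\gamma-r}+c(N)t^k E_{k+r}(f)_{w_{\a,\b},p}$, which is the claimed estimate; the membership $f\in\B^r_p(\wab)$ is part of the conclusion of \cor{cor212}.

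Finally, when $N\le k+r$ one invokes the ``in particular'' clause of \cor{cor212}, in which the term $c(N)t^k E_{k+r}(f)_{w_{\a,\b},p}$ is absent, yielding $\wkr(f^{(r)},t)_{\a,\b,p}\le c\,t^{\gamma-r}$ for all $t>0$. There is essentially no obstacle here: the only points requiring a word of care are the passage from $n^{-\gamma}$ to $\phi(1/(n+1))$ (handled by the elementary constant $2^\gamma$ above) and noting that the restriction $0<t\le 1/2$ in \cor{cor212} is immaterial because for $t$ bounded away from $0$ the left-hand side is bounded and $t^{\gamma-r}$ is bounded below, so the inequality for all $t>0$ follows by enlarging $c$.
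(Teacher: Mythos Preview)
Your approach is exactly the paper's: the corollary is stated there as ``an immediate consequence of \thm{inverse} (\cor{cor212})'', and your choice $\phi(u)=u^\gamma$ together with the two integral computations (using $\gamma>r$ and $\gamma<k+r$, respectively) is precisely what is intended.

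There is, however, one small but genuine slip in your final paragraph. Your justification for extending the estimate beyond the range $0<t\le 1/2$ of \cor{cor212} reads: ``for $t$ bounded away from $0$ the left-hand side is bounded and $t^{\gamma-r}$ is bounded below, so the inequality for all $t>0$ follows by enlarging $c$.'' The bound you have in mind on the left-hand side is $\wkr(f^{(r)},2/k)_{\a,\b,p}$, which depends on $f$; enlarging $c$ to absorb it would make $c$ depend on $f$, which is not permitted. The clean fix is to bypass \cor{cor212} and invoke \thm{inverse} directly (it is stated for all $t>0$): with $E_n(f)_{\wab,p}\le n^{-\gamma}$ for $n\ge N$, the first sum in \ineq{inverse1} is $\le c\max\{N,1/t\}^{r-\gamma}\le c\,t^{\gamma-r}$, and the second is $\le c\,t^k\max\{N,1/t\}^{k+r-\gamma}$, which equals $c\,t^{\gamma-r}$ when $1/t\ge N$ and is $\le c(N)\,t^k$ otherwise. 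For $t\ge 2/k$ the modulus is constant while $t^{\gamma-r}$ increases, so the bound at $t=2/k$ propagates; and on the bounded interval $1/N\le t\le 2/k$ one has $t^k\le C\,t^{\gamma-r}$ with $C$ depending only on $k$, $r$, $\gamma$ (and on $N$ in the general case, which is allowed). This closes the gap without altering the rest of your argument.
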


Finally, we have the following inverse theorem for $0<p<1$ which is an immediate corollary of \cite{k-sing}*{Theorem 10.1} and \cite{kls-umzh}*{Lemma 4.5}.



\begin{theorem}
Let $k \in\N$,  $\a\geq 0$, $\b\geq 0$  and $f\in\Lpab$, $0<p <1$. Then there exists a positive constant $\ccc \leq 1$ depending only on   $k$, $p$, $\a$ and $\b$ such that, for any $n\in\N$,
\[
\w_{k,0}^{\varphi}(f, \ccc n^{-1})_{\a,\b,p}^p \le
 c n^{-kp}   \sum_{m=1}^{n}    m^{kp-1  }     E_{m}(f)_{\wab, p}^p .
\]
\end{theorem}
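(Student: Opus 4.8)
The plan is to run the classical telescoping argument behind inverse theorems, adapted to the range $0<p<1$, where the quasinorm $\norm{\cdot}{p}$ — and hence the modulus $\w_{k,0}^{\varphi}(\cdot,t)_{\a,\b,p}$ — is only $p$-subadditive, i.e. $\norm{g_1+g_2}{p}^{p}\le\norm{g_1}{p}^{p}+\norm{g_2}{p}^{p}$. Fix $n\in\N$ and pick $s\in\N_0$ with $2^{s}\le n<2^{s+1}$. For $0\le j\le s$ I would choose a near-best approximant $P_j\in\mathbb{P}_{2^{j}}$ with $\norm{\wab(f-P_j)}{p}\le c\,E_{2^{j}}(f)_{\wab,p}$; since $P_0$ is a constant, $\w_{k,0}^{\varphi}(P_0,t)_{\a,\b,p}=0$, so from the decomposition $f=(f-P_s)+\sum_{j=1}^{s}(P_j-P_{j-1})+P_0$ and $p$-subadditivity one gets
\[
\w_{k,0}^{\varphi}(f,t)_{\a,\b,p}^{p}\le\w_{k,0}^{\varphi}(f-P_s,t)_{\a,\b,p}^{p}+\sum_{j=1}^{s}\w_{k,0}^{\varphi}(P_j-P_{j-1},t)_{\a,\b,p}^{p},\qquad t>0.
\]
For the first summand one bounds the modulus by the quasinorm, $\w_{k,0}^{\varphi}(f-P_s,t)_{\a,\b,p}^{p}\le c\,\norm{\wab(f-P_s)}{p}^{p}\le c\,E_{2^{s}}(f)_{\wab,p}^{p}$, using that for $\a,\b\ge0$ the moving weight $\wt_{kh}^{\a,\b}$ is dominated by $c\,\wab$ at the nodes of $\Delta_{h\varphi}^{k}$ on $\Dom_{kh}$ and that the corresponding change of variables has Jacobian bounded there.

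The heart of the argument is a Bernstein--Markov type inequality for the modulus of a polynomial: for $Q\in\mathbb{P}_{m}$ and $\a,\b\ge0$,
\[
\w_{k,0}^{\varphi}(Q,t)_{\a,\b,p}^{p}\le c\,\min\{1,(mt)^{kp}\}\,\norm{\wab Q}{p}^{p},\qquad 0<t\le c_0/m,
\]
with an absolute $c_0$; it is precisely to respect this restriction on the argument that the constant $\ccc\le1$ is inserted (so $mt\le\ccc\le c_0$ whenever $m\le2^{s}\le n$ and $t=\ccc/n$), and $\ccc$ also keeps $\Dom_{k\ccc/n}$ in the good range near $\pm1$. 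Granting this, and using $\norm{\wab(P_j-P_{j-1})}{p}^{p}\le c\bigl(E_{2^{j}}(f)_{\wab,p}^{p}+E_{2^{j-1}}(f)_{\wab,p}^{p}\bigr)\le c\,E_{2^{j-1}}(f)_{\wab,p}^{p}$, one obtains with $t=\ccc/n$ (so that $(2^{j}t)^{kp}\le c\,2^{(j-s)kp}$ for $j\le s$), after reindexing,
\[
\w_{k,0}^{\varphi}(f,\ccc/n)_{\a,\b,p}^{p}\le c\,2^{-skp}\sum_{j=0}^{s}2^{jkp}E_{2^{j}}(f)_{\wab,p}^{p}.
\]
Finally, since $m\mapsto E_m(f)_{\wab,p}$ is nonincreasing, $2^{jkp}E_{2^{j}}(f)_{\wab,p}^{p}\le c\sum_{2^{j-1}<m\le2^{j}}m^{kp-1}E_m(f)_{\wab,p}^{p}$ for $j\ge1$ (and the $j=0$ term equals the $m=1$ term), so summing over $j$ and invoking $2^{s}\le n<2^{s+1}$ to replace $2^{-skp}$ by $c\,n^{-kp}$ and $\sum_{m=1}^{2^{s}}$ by $c\sum_{m=1}^{n}$ gives the asserted estimate.

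The only genuinely nontrivial ingredient is the Bernstein--Markov inequality for the weighted modulus in the range $0<p<1$, uniformly up to the endpoints $\pm1$ where both $\wab$ and the step $\varphi$ degenerate and the admissible set $\Dom_{kh}$ shrinks; this is where all the work (and the need for $\ccc\le1$) resides. In the paper this is not reproved from scratch: \cite{k-sing}*{Theorem 10.1} establishes an inverse estimate of exactly this shape for a broad class of (doubling) weights including $\wab$ with $\a,\b\ge0$, phrased in terms of a weight-adapted $k$th modulus, and \cite{kls-umzh}*{Lemma 4.5} identifies that modulus with $\w_{k,0}^{\varphi}(\cdot,t)_{\a,\b,p}$ in the Jacobi case (and records the weight domination used above). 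Combining these two results yields the theorem at once, which is the route we shall follow.
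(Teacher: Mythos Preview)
Your proposal is correct and ultimately takes the same route as the paper: the theorem is stated there as an immediate corollary of \cite{k-sing}*{Theorem 10.1} and \cite{kls-umzh}*{Lemma 4.5}, with no further argument given. Your expository sketch of the underlying telescoping/Bernstein--Markov mechanism is accurate and helpful context, but it is not part of the paper's own proof, which consists solely of invoking those two references.
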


\sect{Auxiliary lemmas}

\begin{lemma}\label{der}
Let $k\in\N$,     $0< \delta  \le 2$, and let $y:=y(x)$,
  $y:[-1,1] \to \R$ be   such that
\[
y(x)+\delta\varphi(y(x))/2 =x, \quad x\in [-1,1].
\]
Then,
\begin{enumerate}[(i)]
\item \label{i}
$y=y(x)$ is strictly increasing on $[-1,1]$, and  $y'(x)\le 2$, $x\in [-1,1]$,
\item \label{ii}
$y\left([-1+2\ddelta,1]\right) =  \Dom_{\delta}$,
\item \label{iii}
$y'(x)\ge 2/3$, $x\in [-1+2\ddelta,1]$,
\item  \label{iv}
if $y_\lambda(x):=y(x)+ \lambda \varphi(y(x))$, then
$1/3\le y_\lambda'(x)\le 3$, for all $|\lambda| \le \delta/2$ and $x\in [-1+2\ddelta,1]$,
\item \label{v}
for all $x \in [-1+2\ddelta,1]$,
 \be \label{v1}
\ddelta + 2(1-x)/3  \le 1-y(x) \le \ddelta + 2(1-x)
\ee
and
 \be \label{v2}
(1+x)/2 \le 1+y(x) \le 1+x.
\ee

\end{enumerate}
\end{lemma}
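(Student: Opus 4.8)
The function $y=y(x)$ is defined implicitly by $y+\delta\varphi(y)/2=x$. I would first establish existence and the formula for $y'$ by implicit differentiation: since $\varphi'(y)=-y/\varphi(y)$, differentiating gives $y'(1-\delta y/(2\varphi(y)))=1$, hence
\[
y'(x)=\frac{1}{\,1-\delta y/(2\varphi(y))\,}=\frac{\varphi(y)}{\varphi(y)-\delta y/2}.
\]
For part (\ref{i}), I would note that the map $t\mapsto t+\delta\varphi(t)/2$ is continuous on $[-1,1]$, sends $\pm1$ to $\pm1$, and is strictly increasing (its derivative $1-\delta t/(2\varphi(t))$ — interpreted appropriately near $t=1$ — stays positive for $\delta\le2$ when $t$ is not too close to $1$; near $t=1$ one uses monotonicity of $t+\delta\varphi(t)/2$ directly since $\varphi$ decreases there but slower). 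Thus $y$ is well-defined and strictly increasing; the bound $y'\le2$, i.e. $\varphi(y)-\delta y/2\ge\varphi(y)/2$, is equivalent to $\delta y\le\varphi(y)$, which I would check by squaring: $\delta^2 y^2\le 1-y^2$, i.e. $y^2\le 1/(1+\delta^2)$, valid precisely when $y\le(1+\delta^2)^{-1/2}$; for $y$ beyond that point $y$ is negative or the inequality $\delta y\le\varphi(y)$ holds trivially — actually I should be careful and instead argue $y'\le 2$ from $x=y+\delta\varphi(y)/2$ by noting $x-y=\delta\varphi(y)/2\ge0$ so $y\le x$, and then bounding $y'$ using the quadratic relation directly. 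I would spell out the cleanest version in the writeup.

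For (\ref{ii}): by definition $\Dom_\delta=\{x: 1-\delta\varphi(x)/2\ge|x|\}\setminus\{\pm1\}=[-1+\mu(\delta),1-\mu(\delta)]$. The point $x$ with $y(x)=1-\mu(\delta)$ should be the right endpoint; I would instead show $y(1)=1$ and $y(-1+2\mu(\delta))=-1+\mu(\delta)$ by direct substitution into $y+\delta\varphi(y)/2=x$, using $\varphi(-1+\mu(\delta))=\sqrt{\mu(\delta)(2-\mu(\delta))}$ and the identity $\mu(\delta)=2\delta^2/(4+\delta^2)$; a short computation should confirm $(-1+\mu(\delta))+\delta\varphi(-1+\mu(\delta))/2=-1+2\mu(\delta)$. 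Then monotonicity from (\ref{i}) gives the claimed image equality. For (\ref{iii}), on $[-1+2\mu(\delta),1]$ we have $y(x)\le 1-\mu(\delta)=1-2\delta^2/(4+\delta^2)$, and $y'(x)\ge 2/3$ is equivalent to $\varphi(y)-\delta y/2\le \tfrac32\varphi(y)/... $ — more precisely $y'\ge 2/3 \iff \varphi(y)-\delta y/2 \le \tfrac{3}{2}\varphi(y)$, i.e. $-\delta y/2 \le \tfrac12\varphi(y)$, i.e. $-\delta y\le\varphi(y)$; if $y\ge0$ this is automatic, and if $y<0$ it reduces again to the quadratic $\delta^2 y^2\le 1-y^2$, which holds since $|y|\le 1-\mu(\delta)$ and a direct check shows $(1-\mu(\delta))^2\delta^2 \le 1-(1-\mu(\delta))^2$ using $\mu(\delta)=2\delta^2/(4+\delta^2)$. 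This last numerical verification is the kind of routine-but-fiddly step I'd present carefully.

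For (\ref{iv}): $y_\lambda'(x)=y'(x)(1+\lambda\varphi'(y))=y'(x)(1-\lambda y/\varphi(y))$. With $|\lambda|\le\delta/2$ and $x\in[-1+2\mu(\delta),1]$, I would combine the two-sided bound $2/3\le y'\le 2$ from (\ref{i}),(\ref{iii}) with a two-sided bound on $1-\lambda y/\varphi(y)$: since $|\lambda|\le\delta/2$ and (from the defining equation) $\delta\varphi(y)/2 = x-y$, one gets $|\lambda y/\varphi(y)|\le (\delta/2)|y|/\varphi(y)$, and $(\delta/2)/\varphi(y) = (x-y)/\varphi(y)^2\cdot\cdots$ — the clean route is to bound $\delta|y|/(2\varphi(y))\le 1/2$ on this range (equivalent once more to $\delta|y|\le\varphi(y)$, handled above), giving $1/2\le 1-\lambda y/\varphi(y)\le 3/2$; then $1/3\le (2/3)(1/2)\le y_\lambda'\le 2\cdot(3/2)=3$. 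I'd adjust the constants if the crude product is slightly off and instead use sharper intermediate bounds.

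Finally (\ref{v}): from $x-y=\delta\varphi(y)/2\ge 0$ we get the upper bounds $y\le x$, hence $1-y\ge 1-x$ and $1+y\le 1+x$; and the lower bound $1+y\ge(1+x)/2$ should follow by integrating $y'\ge... $ — actually more directly: $1+y(x)=1+y(-1+2\mu(\delta))+\int_{-1+2\mu(\delta)}^x y' \ge \mu(\delta)+(2/3)(x-(-1+2\mu(\delta)))$, and comparing this with $(1+x)/2$ reduces to an elementary inequality in $x$ and $\mu(\delta)$. The two-sided estimate on $1-y$ would come the same way from $2/3\le y'\le 2$ integrated from $x=1$ (where $y=1$) backwards: $1-y(x)=\int_x^1 y'(s)\,ds$, giving $\tfrac23(1-x)\le 1-y(x)\le 2(1-x)$; to get the extra $+\mu(\delta)$ I would instead anchor at $x=-1+2\mu(\delta)$ where $1-y=1-(-1+\mu(\delta)+...)$ — wait, there $1-y(-1+2\mu(\delta))=1-(-1+\mu(\delta))=2-\mu(\delta)$, and carry the constant through; a short bookkeeping argument yields the stated $\mu(\delta)+\tfrac23(1-x)\le 1-y(x)\le\mu(\delta)+2(1-x)$.

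The main obstacle I anticipate is not any single deep idea — everything follows from implicit differentiation plus the explicit quadratic $y+\delta\varphi(y)/2=x$ — but rather the careful bookkeeping near the endpoint $x=1$ (where $\varphi(y)\to0$ and $y'$ must still be controlled) and the several elementary-but-unforgiving algebraic verifications that the constants $2/3$, $2$, $1/3$, $3$, and the term $\mu(\delta)$ come out exactly as stated, all of which hinge on the precise value $\mu(\delta)=2\delta^2/(4+\delta^2)$ and the constraint $\delta\le2$. I would organize the writeup so that the single key inequality ``$\delta|y|\le\varphi(y)$ on the relevant range,'' equivalently ``$|y|\le(1+\delta^2)^{-1/2}$,'' is proved once and then reused in (\ref{i}), (\ref{iii}), and (\ref{iv}).
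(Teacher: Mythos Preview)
Your plan contains one genuine error that derails parts \ineq{ii} and \ineq{v}: you assert $y(1)=1$, but in fact $y(1)=1-\ddelta$. The map $g(t)=t+\delta\varphi(t)/2$ is \emph{not} strictly increasing on all of $[-1,1]$: since $g'(t)=1-\delta t/(2\varphi(t))\to-\infty$ as $t\to1^-$, the map increases on $[-1,2/\sqrt{4+\delta^2}]$ and then decreases back to $g(1)=1$. Both $t=1$ and $t=1-\ddelta$ solve $g(t)=1$; the function $y$ in the lemma is (necessarily, given \ineq{i}) the inverse of the \emph{increasing} branch, so $y(1)=1-\ddelta$ --- one checks this directly just as you did for the left endpoint. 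Once this is in place, the $\ddelta$ in \ineq{v1} is no longer mysterious: anchoring at $x=1$ gives $1-y(x)=(1-y(1))+\int_x^1 y'(s)\,ds=\ddelta+\int_x^1 y'(s)\,ds$, and then \ineq{i} and \ineq{iii} yield both bounds immediately; your attempt to recover the $\ddelta$ by anchoring at the left endpoint is unnecessary.

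Your reduction of \ineq{i}, \ineq{iii}, \ineq{iv} to the single inequality $\delta|y|\le\varphi(y)$ is essentially sound on the restricted range $y\in\Dom_\delta$ (one verifies $(1-\ddelta)^2(1+\delta^2)\le1$ for $\delta\le2$), but beware that it \emph{fails} near $y=-1$, so it cannot be invoked on all of $[-1,1]$ for \ineq{i}; fortunately \ineq{i} only needs the one-sided $\delta y\le\varphi(y)$, which is trivial for $y\le0$. The paper avoids the squaring altogether: it rewrites $y+\delta\varphi(y)/2\le1$ as $\delta/(2\varphi(y))\le1/(1+y)$, and (on the range where $y\in\Dom_\delta$) the inequality $y-\delta\varphi(y)/2\ge-1$ as $\delta/(2\varphi(y))\le1/(1-y)$; these combine with $y'=(1-\delta y/(2\varphi(y)))^{-1}$ to give all the derivative bounds by one-line algebra, and also give $(1+x)/2\le1+y$ in \ineq{v2} directly from $1+x=1+y+\delta\varphi(y)/2\le2(1+y)$.
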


\begin{proof} Since $x\le 1$, we have $y+\delta \varphi(y)/2\le1$ which can be rewritten as
%
$ \delta/(2\varphi(y)) \le  1/(1+y)$,
and so, if $y\ge 0$, then
\[
1 -  \frac{\delta y}{2 \varphi(y)} \geq \frac1{1+y } \geq \frac 12,
\]
and, clearly, $1 -   \delta y /(2 \varphi(y)) \ge  1/2$ if $y<0$ as well.

Therefore, since
\[
\frac{dy}{dx} = \left( 1 -  \frac{\delta y}{ 2 \varphi(y)} \right)^{-1},
\]
we immediately conclude that \ineq{i} holds.

Now, since $y$ is nondecreasing,
$y\left([-1+2\ddelta,1]\right) = \left[ y(-1+2\ddelta), y(1)\right]$, and \ineq{ii} follows because
$y(1)=1-\ddelta$ and $y(-1+2\ddelta)=-1+\ddelta$.

It follows from \ineq{ii} that, for $x\in [-1+2\ddelta,1]$, we have $y-\delta \varphi(y)/2\ge -1$, which can be rewritten as
$ \delta/(2\varphi(y)) \le  1/(1-y)$, and so, if $y\le 0$, then
\[
1 -  \frac{\delta y}{ 2 \varphi(y)} \le \frac{1-2y}{1-y} \le \frac 32,
\]
and, clearly, $1 -   \delta y /(2 \varphi(y)) \le  3/2$ if $y> 0$ as well. This implies \ineq{iii}.

Now, it follows from the above estimates that $ \delta/(2\varphi(y)) \le  1/(1+|y|)$,  for $x\in [-1+2\ddelta,1]$, which implies
\[
y_\lambda'(x) =\left(1 - \frac{\lambda  y }{\varphi(y)}\right) y'(x) \le
2 \left(1 +\frac{\delta  |y| }{2\varphi(y)}\right)
\le
\frac{2+4|y|}{1+|y|} \le 3 ,
\]
and
\[
y_\lambda'(x)  \ge
\frac 23 \left(1 - \frac{\delta  |y| }{2\varphi(y)}\right) \ge
    \frac{2}{3(1+|y|)}
    \ge \frac 13 ,
\]
and so \ineq{iv} is verified.

Now, by
 \[
 \frac{dy}{dx}(\xi)=\frac{y(1)- y(x)}{1-x} =  \frac{1-\ddelta- y(x)}{1-x}
 \]
 \ineq{i} and \ineq{iii} imply, for $x\in [-1+2\ddelta,1]$,
 \[
\ddelta + 2(1-x)/3  \le 1-y(x) \le \ddelta + 2(1-x)  ,
\]
which is \ineq{v1}. Finally, the second inequality in \ineq{v2} is obvious, and the first one immediately follows from \ineq{ii} which implies
\[
1+x = 1+y + \delta \varphi(y)/2 \le 2(1+y).
\]
Thus, \ineq{v} is verified.
\end{proof}

\sect{Whitney-type estimates}

In this section, we prove Whitney-type estimates, which we feel are of independent interest, and which we need in order to prove the direct (Jackson-type) theorem (\thm{direct}) for small $n$.

Recall that the celebrated Whitney inequalities for the ordinary moduli of smoothness were first proved by Whitney \cite{w}  for functions in $C[a,b]$. Later Brudnyi \cite{b} extended the inequalities to $L_p[a,b]$, $1\le p<\infty$ and, finally, Storozhenko \cite{st} proved the inequalities for $L_p[a,b]$, $0<p<1$.

\begin{theorem} \label{thm:localwh}
Let $k\in\N$, $\a\geq 0$, $\b\geq 0$,  $0< p\leq \infty$,  $f\in \Lpab$, $0<h\le 2$ and $x_0\in\Dom_{h}$. Then, for any $\theta\in(0,1]$, we have
\[ 
E_k(f, [x_0- h\varphi(x_0)/2, x_0+h\varphi(x_0)/2])_{\wab, p}  \le c \w_{k,0}^{*\varphi}(f,\theta h)_{\a,\b,p}  \le c \w_{k,0}^\varphi(f,\theta h)_{\a,\b,p} ,
\]
where $c$ depends only on $k$,   $\a$, $\b$, $p$ and $\theta$.
\end{theorem}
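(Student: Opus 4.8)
The plan is to reduce the weighted Whitney estimate on the interval $I_{x_0}:=[x_0-h\varphi(x_0)/2,\,x_0+h\varphi(x_0)/2]$ to the classical (unweighted) Whitney inequality on a fixed interval, by means of an affine change of variables that normalizes $I_{x_0}$ to, say, $[-1,1]$ or $[0,1]$. The second inequality in the statement is just \ineq{ineq}, so the whole task is the first inequality. First I would observe that on $I_{x_0}$ the Jacobi weight $\wab$ is essentially constant: since $x_0\in\Dom_h$, Lemma~\ref{der} (specifically the two-sided estimates \ineq{v1}, \ineq{v2} applied with $\delta=h$, or a direct computation) shows that $1-x\asymp 1-x_0$ and $1+x\asymp 1+x_0$ for $x\in I_{x_0}$, with constants depending only on nothing (or on $k$ via $\theta$), hence $\wab(x)\asymp \wab(x_0)\asymp \varphi(x_0)^{\a+\b}(\dots)$ uniformly on $I_{x_0}$, and similarly $\varphi(x)\asymp\varphi(x_0)$ there. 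Consequently $E_k(f,I_{x_0})_{\wab,p}\asymp \wab(x_0)\,E_k(f,I_{x_0})_{p}$, where the right-hand side is the \emph{unweighted} best approximation by $\Poly_k$ on $I_{x_0}$.

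Next I would apply the classical Whitney inequality (Whitney \cite{w} for $p=\infty$, Brudnyi \cite{b} for $1\le p<\infty$, Storozhenko \cite{st} for $0<p<1$) on $I_{x_0}$:
\[
E_k(f,I_{x_0})_p \le c\,\omega_k\!\left(f,\,|I_{x_0}|/k\right)_{L_p(I_{x_0})} \le c \sup_{0<\tau\le |I_{x_0}|/k}\Big(\int_{I_{x_0,k\tau}} |\Delta^k_\tau(f,x)|^p\,dx\Big)^{1/p},
\]
where $|I_{x_0}|=h\varphi(x_0)$ and $I_{x_0,k\tau}$ is the sub-interval on which the $k$th difference with step $\tau$ is defined. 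The key point is now to relate these \emph{constant-step} differences $\Delta^k_\tau(f,x)$ on $I_{x_0}$ to the \emph{variable-step} (Ditzian--Totik-type) differences $\Delta^k_{u\varphi(x)}(f,x)$ that enter $\wkrav(f,\theta h)_{\a,\b,p}$. Since $\varphi(x)\asymp\varphi(x_0)$ on $I_{x_0}$, a constant step $\tau\le h\varphi(x_0)/k$ corresponds to a variable step $u\varphi(x)$ with $u\asymp \tau/\varphi(x_0) \le h/k \cdot(\text{const})$; choosing $\theta\in(0,1]$ dictates rescaling so that $u$ stays $\le\theta h$ — this is exactly why the statement allows an arbitrary $\theta$ and why $c$ is permitted to depend on $\theta$. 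I would make this precise by covering the range of admissible constant steps by finitely many (depending on $k$ and $\theta$) dyadic values $u_j$ of the variable-step parameter, bounding each $\|\Delta^k_\tau(f,\cdot)\|_{L_p(I_{x_0,k\tau})}$ by $\|\Delta^k_{u_j\varphi(\cdot)}(f,\cdot)\|_{L_p(\Dom_{ku_j}\cap(\text{neighbourhood of }I_{x_0}))}$ up to the weight comparison, and then, multiplying through by $\wab(x_0)\asymp\weight(x)$ on the relevant region, recognizing the right-hand side as bounded by $\wkrav(f,\theta h)_{\a,\b,p}$ (for $p=\infty$ this averaged modulus is just $\wkr$ by \ineq{ineq1}; for $p<\infty$ the supremum over a finite set of $\tau$'s is trivially dominated by the average over $(0,\theta h)$ after absorbing constants).

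The main obstacle I anticipate is the careful bookkeeping in the step-size comparison near the endpoints of $I_{x_0}$: a constant-step $k$th difference centered near the edge of $I_{x_0}$ may, after passing to the variable step $u\varphi(\cdot)$, require points slightly outside $I_{x_0}$, so I must either enlarge $I_{x_0}$ slightly to a comparable interval still inside $[-1,1]$ (using $x_0\in\Dom_h$ and $\theta\le1$ to guarantee room) or argue that such boundary differences can be controlled by interior ones — this is where the freedom in $\theta$ and the $\theta$-dependence of the constant are spent. A secondary technical point is ensuring the weight equivalences $\wab(x)\asymp\wab(x_0)$, $\varphi(x)\asymp\varphi(x_0)$, $\weight(x)\asymp\wab(x_0)$ hold on the (slightly enlarged) interval with constants independent of $x_0$ and $h$; this follows from Lemma~\ref{der}\ineq{v} and the definition of $\Dom_h$, but needs to be stated cleanly since $p$ may be small. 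Once these equivalences and the finite covering of step sizes are in hand, the estimate follows by chaining the classical Whitney inequality with these comparisons.
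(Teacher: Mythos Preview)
Your central claim---that $1-x\asymp 1-x_0$ and $1+x\asymp 1+x_0$ uniformly for $x\in I_{x_0}$, so that $\wab$ is essentially constant on $I_{x_0}$---is false in general, and this is the heart of the difficulty. Take $x_0=1-\mu(h)$, the right endpoint of $\Dom_h$; then $b:=x_0+h\varphi(x_0)/2=1$, so $1-b=0$ while $1-x_0=\mu(h)>0$, and $(1-x)^\a$ ranges all the way down to $0$ on $I_{x_0}$. More generally, whenever $x_0$ is close enough to the boundary of $\Dom_h$ that $\varphi(b)\ll\varphi(x_0)$, the ratio $(1-b)/(1-x_0)$ is not bounded below, and your reduction to the unweighted Whitney inequality on $I_{x_0}$ collapses. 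Lemma~\ref{der}\ineq{v1}--\ineq{v2} do not give the two-sided equivalence you assert: those estimates concern the inverse map $y(x)$ of $y\mapsto y+\delta\varphi(y)/2$, and applied with $\delta=h$ they yield $1-x_0=1-y(b)\le\mu(h)+2(1-b)$, which is perfectly compatible with $1-b=0$.

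The paper handles exactly this obstruction by splitting into two cases. In Case~(i), $\varphi(x_0)\le 2\varphi(b)$, one does have $1-x_0\le 8(1-x)$ on $[a,b]$ and your argument (classical Whitney plus weight comparison plus passage from constant to variable step) goes through essentially as you outline. In Case~(ii), $\varphi(x_0)>2\varphi(b)$, the paper first reduces to $b=1$, applies the Case~(i) argument only on the truncated interval $J=[a,b]\cap[-\tilde b,\tilde b]$ with $\tilde b=1-\tilde h^2$ (where the weight \emph{is} comparable), and then controls $\norm{\wab(f-p_k)}{\Lp[\tilde b,1]}$ by a separate argument: write $g=f-p_k$ at a point $x\in[\tilde b,1]$ as a $k$th difference at a shifted point $y(x)$ plus a combination of values $g(y_i(x))$ lying back in $J$, using the change of variables from Lemma~\ref{der}. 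This last step is where the weighted averaged modulus is genuinely needed and where the proof earns its keep; your proposal does not contain it.
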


Choosing $x_0=0$ and $h=2$   in \thm{thm:localwh} (and replacing $2\theta$ by $\theta$) we immediately get the following corollary.

\begin{corollary} \label{whcor}
Let $k\in\N$, $\a\geq 0$, $\b\geq 0$, $0< p\leq \infty$ and $f\in \Lpab$. Then, for any $\theta\in(0,1]$, we have
\be \label{ineq:regwh}
E_k(f)_{\wab,p} \le c \w_{k,0}^{*\varphi}(f,  \theta)_{\a,\b,p} \le c \w_{k,0}^\varphi(f,  \theta)_{\a,\b,p} ,
\ee
where $c$ depends only on $k$,   $\a$, $\b$, $p$ and $\theta$.
\end{corollary}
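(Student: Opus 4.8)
The plan is to deduce the local Whitney estimate (\thm{thm:localwh}) from the known classical Whitney theorem on a fixed interval (Whitney \cite{w}, Brudnyi \cite{b}, Storozhenko \cite{st}) by a change of variables that transports the interval $[x_0-h\varphi(x_0)/2, x_0+h\varphi(x_0)/2]$ to a reference interval, while keeping careful track of the Jacobi weight $\wab$ and the step-function weight $\wt_\delta^{\xi,\zeta}$ appearing in the definition of $\w_{k,0}^{*\varphi}$ and $\w_{k,0}^\varphi$. Since the second inequality in the statement is just \ineq{ineq}, it suffices to bound $E_k(f,[x_0-h\varphi(x_0)/2, x_0+h\varphi(x_0)/2])_{\wab,p}$ by $c\,\w_{k,0}^{*\varphi}(f,\theta h)_{\a,\b,p}$.

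First I would fix $x_0\in\Dom_h$ and set $\delta:=h$, and introduce the substitution $y=y(x)$ from \lem{der}, i.e. $y(x)+\delta\varphi(y(x))/2=x$; then $y$ maps $[-1+2\mu(\delta),1]$ onto $\Dom_\delta$, with $y'$ and the dilated maps $y_\lambda$ having derivatives bounded above and below by absolute constants (parts \ineq{iii}, \ineq{iv} of \lem{der}), and $1\pm y(x)$ comparable to $\mu(\delta)+(1-x)$ and $(1+x)$ respectively (part \ineq{v}). The idea is that in the new variable, the interval $[x_0-h\varphi(x_0)/2,x_0+h\varphi(x_0)/2]$ pulls back to an interval of length comparable to $h\varphi$ at its endpoints, on which the weight $\weight$ (with $r=0$, so $\wt_{k\tau}^{\a,\b}$) is, up to constants depending only on $\a,\b,k$, equal to the value of $\wab$ at (a fixed point of) that interval. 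I would then rescale this pulled-back interval affinely to $[-1,1]$ (or to a fixed interval), apply the classical Whitney inequality there to the transformed function, and in the Whitney modulus $\w_k(g,\cdot)_{[-1,1]}$ replace the plain $k$th difference by the $\varphi$-difference $\Delta_{\tau\varphi}^k$: this is where the averaging in $\w_{k,0}^{*\varphi}$ enters, because after the change of variables the symmetric differences with uniform step in the new variable correspond to $\varphi$-differences with step $\tau$ in the old variable, $\tau$ ranging over an interval proportional to $\theta h$, and the weight $\wt_{k\tau}^{\a,\b}$ only shifts the evaluation points by $O(\tau\varphi)$, which is harmless by parts \ineq{iv}--\ineq{v} of \lem{der}. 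Putting constant factors (which depend on $k,\a,\b,p,\theta$) back together yields the claimed bound.

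The technical heart is the bookkeeping with the two weights on the small interval $[x_0-h\varphi(x_0)/2,x_0+h\varphi(x_0)/2]$: one must show that $\wab(x)$ is equivalent, uniformly over that interval and over $x_0\in\Dom_h$, $0<h\le 2$, to the constant $(1-x_0)^\a(1+x_0)^\b$ times controllable powers, and likewise that $\wt_{k\tau}^{\a,\b}(x)$ is equivalent to $\wab$ at the center of the difference stencil, using that $x_0\in\Dom_h$ forces $\varphi(x_0)\le c(\mu(h)+\operatorname{dist}(x_0,\pm1))^{1/2}$-type relations and that $1-x_0-k\tau\varphi(x_0)/2$ stays comparable to $1-x_0$ when $k\tau\le h$. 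For $x_0$ near the endpoints $\pm1$ one has to be a little more careful: there $\varphi(x_0)$ is small, the interval degenerates, and the claim is that $E_k$ on a very short interval near $1$ is still controlled because the weight $\wab$ is then essentially constant on that interval (this is exactly where $\a,\b\ge0$ is used — the weight does not blow up). This endpoint analysis, combined with a routine compactness/scaling argument on the ``bulk'' where $\varphi(x_0)$ is bounded below, is the main obstacle; once the weight equivalences are established, the reduction to the classical Whitney inequality is standard.

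The corollary \cor{whcor} requires no separate argument: one simply specializes \thm{thm:localwh} to $x_0=0$, $h=2$ (so $[x_0-h\varphi(x_0)/2,x_0+h\varphi(x_0)/2]=[-1,1]$ and $\Dom_h=\{0\}\ni 0$), and renames $2\theta$ as $\theta$, noting that $\theta\in(0,1]$ corresponds to $2\theta\in(0,2]$ but the modulus $\w_{k,0}^{\varphi}(f,\cdot)_{\a,\b,p}$ is nondecreasing, so restricting to $\theta\in(0,1]$ in \ineq{ineq:regwh} is no loss and the constant still depends only on $k,\a,\b,p,\theta$.
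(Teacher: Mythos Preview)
For the corollary as stated, your final paragraph is exactly the paper's argument: specialize \thm{thm:localwh} to $x_0=0$, $h=2$ (so the interval is $[-1,1]$), and relabel $2\theta$ as $\theta$. Nothing more is needed.

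The bulk of your proposal, however, is a sketch of \thm{thm:localwh} itself, and there you have a genuine gap. Your central claim --- that on $[a,b]:=[x_0-h\varphi(x_0)/2,x_0+h\varphi(x_0)/2]$ the weight $\wab(x)$ is uniformly equivalent to the constant $\wab(x_0)$ --- fails when $b$ is near $1$: if $b=1$ (which is allowed, since $x_0=1-\mu(h)\in\Dom_h$), then $(1-x)^\a$ runs from $(1-a)^\a$ down to $0$ on $[a,b]$, so no two-sided bound can hold. Your endpoint paragraph misdiagnoses this: with $\a\ge0$ the weight does not blow up, it \emph{vanishes}, and that is precisely what obstructs a naive reduction to unweighted Whitney. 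The paper instead splits into two cases. When $\varphi(x_0)\le 2\varphi(b)$ (so $b$ stays away from $1$ relative to $x_0$), the weight really is comparable to $\wab(x_0)$ on $[a,b]$ and classical Whitney applies directly. When $\varphi(x_0)>2\varphi(b)$ (reduced to $b=1$), the paper first applies Whitney on a sub-interval $J=[a,1]\cap[-\tilde b,\tilde b]$ bounded away from $\pm1$, and then controls $\norm{\wab(f-p_k)}{\Lp[\tilde b,1]}$ by writing $g(x)=f(x)-p_k(x)$ as a $k$th $\varphi$-difference at $y(x)$ plus translates $g(y_i(x))$ that land back in $J$, using \lem{der} with $\delta=kt$ for a \emph{range} of $t$'s and then averaging in $t$ to produce the averaged modulus $\w_{k,0}^{*\varphi}$. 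Your single global substitution with $\delta=h$ does not supply this two-stage structure.
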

Also, if $x_0\pm h\varphi(x_0)/2=\pm1$,   \thm{thm:localwh} immediately gives the following result (by letting
$h:= t \sqrt{4A/(4-At^2)}$, $x_0 := \pm(1- \mu(h))$,
  $\theta := \min\{1, 1/\sqrt{2A}\}$, and using monotonicity of the moduli with respect to $t$).

 \begin{corollary} \label{corwh}
Let $k\in\N$, $\a\geq 0$, $\b\geq 0$, $A>0$, $0< p\leq \infty$ and $f\in \Lpab$. Then, for any $0<t\leq \sqrt{2/A}$, we have
\[
E_k(f, [1-At^2,1])_{\wab, p} \le c \w_{k,0}^{*\varphi}(f,t)_{\a,\b,p} \le c \w_{k,0}^\varphi(f,t)_{\a,\b,p} ,
\]
and
\[
E_k(f, [-1,-1+At^2])_{\wab, p} \le c \w_{k,0}^{*\varphi}(f,t)_{\a,\b,p} \le c \w_{k,0}^\varphi(f,t)_{\a,\b,p} ,
\]
where $c$ depends on $k$, $p$, $\a$, $\b$ and $A$.
\end{corollary}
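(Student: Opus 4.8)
The plan is to obtain both inequalities directly from \thm{thm:localwh} by a single substitution of parameters, exactly as announced in the paragraph preceding the statement; the entire content is then the bookkeeping needed to check that the chosen $h,x_0,\theta$ satisfy the hypotheses of that theorem and that the Whitney interval it produces is precisely $[1-At^2,1]$ (resp.\ $[-1,-1+At^2]$).

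Fix $A>0$ and $0<t\le\sqrt{2/A}$, and put
\[
h:=t\sqrt{\frac{4A}{4-At^2}},\qquad x_0:=1-\mu(h),\qquad \theta:=\min\{1,\,1/\sqrt{2A}\}.
\]
Since $At^2\le2$ we have $4-At^2\ge2>0$, so $h$ is well defined and positive; moreover $h^2=4At^2/(4-At^2)$ is increasing in $t^2$ on $(0,2/A]$ with maximum $4$ at $t^2=2/A$, whence $0<h\le2$. As $\mu(h)>0$, the point $x_0=1-\mu(h)$ is the right endpoint of $\Dom_h=[-1+\mu(h),1-\mu(h)]$, so $x_0\in\Dom_h$. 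Using $1-x_0=\mu(h)=2h^2/(4+h^2)$ and $1+x_0=2-\mu(h)=8/(4+h^2)$, I compute $\varphi(x_0)=\sqrt{(1-x_0)(1+x_0)}=4h/(4+h^2)$, hence $\tfrac12 h\varphi(x_0)=2h^2/(4+h^2)=\mu(h)=1-x_0$. Therefore $x_0+\tfrac12 h\varphi(x_0)=1$ and $x_0-\tfrac12 h\varphi(x_0)=2x_0-1=1-2\mu(h)=1-h\varphi(x_0)$; substituting $h^2=4At^2/(4-At^2)$ (so that $4+h^2=16/(4-At^2)$) into $h\varphi(x_0)=4h^2/(4+h^2)$ gives $h\varphi(x_0)=At^2$, so $[x_0-\tfrac12 h\varphi(x_0),\,x_0+\tfrac12 h\varphi(x_0)]=[1-At^2,1]$. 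Finally $t/h=\sqrt{(4-At^2)/(4A)}\ge\sqrt{2/(4A)}=1/\sqrt{2A}\ge\theta$, so $\theta\in(0,1]$ and $\theta h\le t$.

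Now \thm{thm:localwh} applied with these $h,x_0,\theta$ gives $E_k(f,[1-At^2,1])_{\wab,p}\le c\,\w_{k,0}^{*\varphi}(f,\theta h)_{\a,\b,p}\le c\,\w_{k,0}^{\varphi}(f,\theta h)_{\a,\b,p}$, where $c$ depends only on $k,\a,\b,p$ and $\theta$, hence only on $k,\a,\b,p$ and $A$. It remains to pass from the argument $\theta h$ to $t$. For $\w_{k,0}^{\varphi}$ this is immediate, since $\w_{k,0}^{\varphi}(f,\cdot)_{\a,\b,p}$ is nondecreasing by its definition \ineq{wkrdefinition} as a supremum over an interval $[0,\cdot]$; and $\w_{k,0}^{*\varphi}(f,t)_{\a,\b,p}\le\w_{k,0}^{\varphi}(f,t)_{\a,\b,p}$ by \ineq{ineq}. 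For $\w_{k,0}^{*\varphi}$ with $p<\infty$, nonnegativity of the integrand in its definition gives $\w_{k,0}^{*\varphi}(f,\lambda s)_{\a,\b,p}^p\le\lambda^{-1}\w_{k,0}^{*\varphi}(f,s)_{\a,\b,p}^p$ for $0<\lambda\le1$; applying this with $s=t$ and $\lambda=\theta h/t$, which satisfies $\lambda\ge\theta\sqrt{A}>0$ because $h^2/t^2=4A/(4-At^2)\ge A$, yields $\w_{k,0}^{*\varphi}(f,\theta h)_{\a,\b,p}\le c(A,p)\,\w_{k,0}^{*\varphi}(f,t)_{\a,\b,p}$ (for $p=\infty$, $\w_{k,0}^{*\varphi}=\w_{k,0}^{\varphi}$ by \ineq{ineq1}, and monotonicity again applies). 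Combining these estimates gives the first assertion,
\[
E_k(f,[1-At^2,1])_{\wab,p}\le c\,\w_{k,0}^{*\varphi}(f,t)_{\a,\b,p}\le c\,\w_{k,0}^{\varphi}(f,t)_{\a,\b,p}.
\]

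For the interval $[-1,-1+At^2]$ I would invoke the reflection $x\mapsto-x$: with $g(x):=f(-x)$ one has $w_{\a,\b}(-x)=w_{\b,\a}(x)$, $\varphi(-x)=\varphi(x)$, $\wt^{\a,\b}_\delta(-x)=\wt^{\b,\a}_\delta(x)$, and $|\Delta^k_{h\varphi(x)}(f,x)|=|\Delta^k_{h\varphi(-x)}(g,-x)|$, so $E_k(f,[-1,-1+At^2])_{w_{\a,\b},p}=E_k(g,[1-At^2,1])_{w_{\b,\a},p}$ while $\w_{k,0}^{*\varphi}(f,t)_{\a,\b,p}=\w_{k,0}^{*\varphi}(g,t)_{\b,\a,p}$ and likewise for $\w_{k,0}^{\varphi}$; since $\b,\a\ge0$, the first assertion applies to $g$ with $\a$ and $\b$ interchanged. (Equivalently, one may rerun the computation above with $x_0:=-1+\mu(h)$.) I do not expect a genuine obstacle: the proof is a routine substitution into \thm{thm:localwh}. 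The only points needing care are verifying $0<h\le2$ and $\theta\in(0,1]$ over the whole range $0<t\le\sqrt{2/A}$, together with the identity $h\varphi(x_0)=At^2$ that makes the Whitney interval land exactly on $[1-At^2,1]$; the mild quantitative monotonicity of the averaged modulus is the only slightly non-automatic ingredient, and it itself reduces to nonnegativity of the integrand.
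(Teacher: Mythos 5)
Your proposal is correct and is essentially the paper's own argument: the paper obtains the corollary from \thm{thm:localwh} with exactly the same choices $h=t\sqrt{4A/(4-At^2)}$, $x_0=\pm(1-\mu(h))$, $\theta=\min\{1,1/\sqrt{2A}\}$ and an appeal to monotonicity of the moduli in $t$. Your verification that $h\varphi(x_0)=At^2$ (so the Whitney interval is exactly $[1-At^2,1]$) and your quantitative near-monotonicity of the averaged modulus merely fill in details the paper leaves implicit.
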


\begin{proof}[Proof of $\thm{thm:localwh}$]  \thm{thm:localwh}  follows from   the classical (non-weighted) Whitney's inequality (see \cite{dl}*{Theorem 6.4.2 and Theorem 12.5.5}), which readily implies (see \eg \cite{pp}*{Sections 3.1 and 7.1}), for each interval $J\subset[-1,1]$,
the existence of a polynomial $p_k\in\Poly_k$, such that
\be \label{initial}
\norm{f-p_k}{\Lp(J)} \leq c \w_k(f,  |J|; J)_p\le  c \frac{|J|^{k-1+1/p_1}}{\delta^{k-1+1/p_1}}\w_k\left(f,  \delta; J\right)_p,\quad 0<\delta\le |J|,
\ee
where $|J|$ is the length of of the interval $J$ and $p_1:=\min\{1,p\}$.

In order to prove  \thm{thm:localwh}, we assume,
without loss of generality,  that $x_0\ge 0$, and denote
$$
[a,b]:=[x_0-h\varphi(x_0)/2, x_0+h\varphi(x_0)/2],\qquad
W_p:=\w_{k,0}^{*\varphi}(f, \theta h)_{\a,\b,p},
$$
Note that
\be\label{11}
1-x\le2(1-x_0)\quad\text{and}\quad 1+x\le2(1+x_0),\quad x\in[a,b],
\ee
since $x_0$ is the middle of $[a,b]$, and so
\be \label{123}
\varphi(b) \le \varphi(x) \le 2 \varphi(x_0) , \quad \text{for all} \quad   x\in [a,b] ,
\ee
where the first inequality is valid since $|x|\le |b|$ (because $x_0$ is assumed to be nonnegative).

We will consider two cases: (i) $\varphi(x_0)\le2\varphi(b)$ and (ii)   $\varphi(x_0) > 2\varphi(b)$.

\medskip
\noindent
{\bf Case (i):}
$\varphi(x_0)\le2\varphi(b)$, $x\in [a,b]$.

Then, for all $x\in[a,b]$,
\be \label{44}
 1-x_0\le \varphi^2(x_0)\le 4\varphi^2(b)\le 4\varphi^2(x)<8(1-x)
\ee
and
\be\label{44a}
  1+x_0= \frac{\varphi^2(x_0)}{1-x_0}\le\frac{ 4\varphi^2(b)}{1-x_0}\le\frac{ 8\varphi^2(x)}{1-x}=8(1+x).
\ee

Now, let $J:=[a,b]$ and $\delta:=\theta h\varphi(b)$, and note that
\be\label{88}
\frac\theta2|J| = \frac\theta2 h\varphi(x_0) \le  \delta \le \theta h\varphi(x_0) \le |J|.
\ee
So, for $p=\infty$, we have
\begin{align*} \label{initial}
 \w_k(f,\delta; J)_\infty&= \sup_{0<s\le\delta} \sup_{x\in J} \left|  \Delta_{s}^k (f,x; J)\right|
= \sup_{0<\tau \leq \delta/\varphi(b)} \sup_{x\in J} \left|  \Delta_{\tau\varphi(b)}^k (f,x; J)\right|\\
&= \sup_{0<\tau\leq \theta h} \sup_{x\in J} \left|  \Delta_{\tau\varphi(b)}^k (f,x; J)\right|
\le\sup_{0<\tau\leq \theta h} \sup_{x\in J} \left|  \Delta_{\tau\varphi(x)}^k (f,x; J)\right|\\
&\le c\wab^{-1}(x_0)\sup_{0<\tau\leq \theta h} \sup_{x\in J} \left| \wt_{k\tau}^{\a,\b}(x) \Delta_{\tau\varphi(x)}^k (f,x; J)\right|\\
&=c\wab^{-1}(x_0)W_{\infty},
\end{align*}
where in the last inequality we used the fact that the estimates \ineq{44} and \ineq{44a} imply that
$\wab(x) \le c \wt_{k\tau}^{\a,\b}(x)$, for all $x$ such that $x\pm k\tau\varphi(x)/2 \in J$.

 If $p<\infty$, then it is well known (see \eg \cite{pp}*{Lemma 7.2}) that
 \[
 \w_k(f, t; J)_p^p \leq c \frac{1}{t} \int_0^t \int_J |\Delta_{s}^k (f,x;J)|^p dx ds,\quad 0< t \le |J|/k.
 \]
 Hence, using \ineq{88} and \ineq{123} we have
 \begin{align*}
 c\delta\w_k(f, \delta; J)_p^p & \le      \int_J \int_0^{\delta} |\Delta_{s}^k (f,x,J)|^p ds dx \\
 & = \int_J \int_0^{\delta/\varphi(x)}  \varphi(x) |\Delta_{\tau \varphi(x)}^k (f,x,J)|^p d\tau dx \\
& \le
  \int_J \int_0^{\theta h}  \varphi(x) |\Delta_{\tau \varphi(x)}^k (f,x,J)|^p d\tau dx \\
  & \le
  c \wab^{-p}(x_0) \varphi(b)   \int_J \int_0^{\theta h}  | \wt_{k\tau}^{\a,\b}(x) \Delta_{\tau \varphi(x)}^k (f,x,J)|^p d\tau dx \\
  & \leq
 c  \wab^{-p}(x_0) \varphi(b)   \int_0^{\theta h} \int_{\Dom_{k\tau}} | \wt_{k\tau}^{\a,\b}(x) \Delta_{\tau \varphi(x)}^k (f,x)|^p dx  d\tau \\
 & =
 c  \wab^{-p}(x_0)  \theta h \varphi(b)  W_p^p.
 \end{align*}
Thus, for all $0< p\le\infty$, we have
$$
\w_k(f, \delta; J)_p\le c\wab^{-1}(x_0)W_p ,
$$
which, by virtue of  \ineq{11}, yields
\[
\norm{\wab(f-p_k)}{\Lp(J)} \leq c\wab(x_0) \norm{f-p_k}{\Lp(J)} \leq
c\wab(x_0) \w_k\left(f,  \delta; J\right)_p\le cW_p,
\]
and so the proof is complete in Case (i).

\medskip
\noindent
{\bf Case (ii):}  $\varphi(x_0)>2\varphi(b)$.

We first note that, in this case, it suffices to assume that $b=1$. Indeed, suppose that  the theorem is proved for all $\hat x_0$ and $\hat h$ such that  $\hat x_0+\hat h\varphi(\hat x_0)/2=1$, and let $x_0$ and $h$ be such that $\varphi(x_0)>2\varphi(b)$ (recall that $b =   x_0+  h\varphi( x_0)/2$).
We let $\hat x_0 := x_0$, $\hat h := 2(1-x_0)/\varphi(x_0)$ and note that $x_0+\hat h\varphi(x_0)/2=1$.
Now, since
\[
1-x_0 = \frac{\varphi^2(x_0)}{1+x_0} > \frac{4\varphi^2(b)}{1+x_0} = 4(1-b) \frac{1+b}{1+x_0} \ge 4 (1-b) ,
\]
we have
\[
h\varphi(x_0) = 2(b-x_0) = 2(1-x_0)-2(1-b) > 3(1-x_0)/2 .
\]
Therefore,
$h\le \hat h \le 4h/3$, and so
\begin{align*}
\lefteqn{ E_k(f, [x_0- h\varphi(x_0)/2, x_0+h\varphi(x_0)/2])_{\wab, p} }\\
&\quad \le E_k(f, [x_0- \hat h\varphi(x_0)/2, x_0+\hat h\varphi(x_0)/2])_{\wab, p} \\
&\quad \le
c \w_{k,0}^{*\varphi}(f,\theta_1 \hat h)_{\a,\b,p}
\le
cW_p ,
\end{align*}
where $\theta_1 := 3\theta/4$.

\medskip

Hence, for the rest of this proof, we assume that $b=1$. Note that
\be \label{vari}
b-a =  h\varphi(x_0) = 2(1-x_0) = 2\mu(h) \in [h^2/2, h^2] .
\ee

Define
\[
\tilde h:=\frac{\theta h}{10k}, \quad \tilde b:=1-\tilde h^2\quad\text{and}\quad  J:=[a,b]\cap[-\tilde b,\tilde b].
\]
Then $x_0\in J$, and, for all $x\in J$, we have
\be\label{99}
\frac{1-x_0}{1-x}\le\frac{\mu(h)}{\tilde h^2}<c,\quad\frac{1+x_0}{1+x} \le \frac{2}{\max\{\tilde h^2, 1+a\}} \le \frac{c}{\max\{ h^2, 4-h^2\}}  <c,
\ee
 and
\[
\varphi(\tilde b)\le \varphi(x)\le 2\varphi(x_0) \le c\varphi(\tilde b) .
\]
We now let $\delta:= \theta h\varphi(\tilde b)$, note that
\[
c|J| \le c(b-a) \le  \delta \le b-a \le c|J| ,
\]
and conclude using the same argument that was used in Case (i) that
 there is a polynomial
$p_k\in\Poly_k$, such that
\be\label{66}
\norm{\wab(f-p_k)}{\Lp( J)} \leq   cW_p.
\ee

So, to finish the proof in   Case (ii) we have to show that, for the function
$g:=f-p_k$,
the inequalities
\be\label{55}
\norm{\wab g}{\Lp[\tilde b,1]} \leq c   W_p.
\ee
and, if $a<-\tilde b$,
\be\label{56}
\norm{\wab g}{\Lp[a,-\tilde b]} \leq c   W_p.
\ee
hold. We prove \ineq{55}, the proof of \ineq{56} being similar.

 To this end let $t\in [ 2\tilde h/\sqrt{k}, 4\tilde h/\sqrt{k}]$ be fixed for now, and denote
  by $y=y(x)$ and $y_i=y_i(x)$, $1\leq i \leq k$, the functions  such that 
\[
y(x)+kt\varphi(y(x))/2=x \andd y_i(x):=x-it\varphi(y(x)) = y(x) +(k/2-i) t \varphi(y(x)) .
\]

%
%
%

We now note that $[\tilde b, 1]\subset [-1+2\mu(kt),1]$,
since
\[
-1+2\mu(kt) \le -1+k^2t^2 \le -1+16k \tilde h^2 \le 1-\tilde h^2 = \tilde b
\]
and so \lem{der} with $\delta = kt$ implies that, for all $x\in [\tilde b, 1]$, $2/3\le y'(x)\le 2$,  $1/3\le y_i'(x)\le 3$, and
\be \label{var}
\varphi^2(y(x)) \le 2(\mu(kt)+2 \tilde h^2) \le k^2t^2 + 4\tilde h^2   \le 25k\tilde h^2 .
\ee
Additionally, note that
\be \label{777}
y_i(x)\in J , \quad x\in [\tilde b, 1] \andd 1\le i\le k .
\ee
 Indeed,  since $y(1)=1-\mu(kt)$, we have, for $x\in [\tilde b, 1]$,
\[
y_i(x) \le y_1(x) \le y_1(1) = 1-t \varphi(y(1))= 1- 2\mu(kt)/k \le 1- kt^2/2 \le 1-2\tilde h^2 < \tilde b ,
\]
 and, using \ineq{var} and \ineq{vari},
\begin{align*}
y_i(x) & \ge y_k(x) \ge y_k(\tilde b) = \tilde b - kt\varphi(y(\tilde b)) \ge 1-\tilde h^2 - 5  k^{3/2} t \tilde h
\ge 1-21k \tilde h^2\\
& \ge \max\{-1+\tilde h^2, a\},
\end{align*}
which yields \ineq{777}. Note also that the above implies that $1+y (x) \ge 3kt\varphi(y(x))/2$, for  $x\in [\tilde b, 1]$.

%

Hence,
\[
\wab(x) = \wab\left(y(x)+ kt  \varphi(y(x))/2 \right)\le 2^\b\wab(y_i(x)) , \quad x\in[\tilde b,1] ,
\]
and using \ineq{777} and \ineq{66} we get, for $0<p<\infty$,
\begin{align*}
\norm{\wab  g(y_i)}{\Lp[\tilde b,1]} & \le 2^\b \norm{\wab(y_i) g(y_i)}{\Lp[\tilde b,1]} \le c \norm{ \wab(y_i) g(y_i)(y_i')^{1/p}}{\Lp[\tilde b,1]}  \\
&\le   c \norm{ \wab  g }{\Lp(J)}     \le
 cW_p .
\end{align*}
If $p=\infty$, then  similar (and, in fact, simpler) arguments yield
\[
  \|\wab g(y_i)\|_{L_\infty[\tilde b,1]} \leq  cW_\infty , \quad 1\le i \le k .
\]
Now, for $x\in [\tilde b, 1]$,
\begin{align*}
g(x) &=
 \Delta_{t\varphi(y(x))}^k (g,y(x)) - \sum_{i=0}^{k-1}(-1)^{k-i} \binom{k}{i}  g\left(y(x)+(i-\frac k2)t \varphi(y(x))\right)\\
&= \Delta_{t\varphi(y(x))}^k (g,y(x)) - \sum_{i=1}^{k}(-1)^{i} \binom{k}{i}    g\left(y_i(x)\right),
\end{align*}
and so
\begin{align*}
\|\wab g\|_{L_p[\tilde b,1]} & \leq
c \norm{\wab \Delta_{t\varphi(y)}^k (g,y) }{\Lp[\tilde b,1]}
+
c \sum_{i=1}^{k} \binom{k}{i}    \norm{\wab g(y_i)}{\Lp[\tilde b,1]}   \\
&\le c\norm{\wab \Delta_{t\varphi(y)}^k (g,y) }{\Lp[\tilde b,1]} + cW_p \\
&\le c\norm{\wt_{tk}^{\a,\b}(y) \Delta_{t\varphi(y)}^k (g,y) }{\Lp[\tilde b,1]} + cW_p \\
&\le c\norm{\wt_{tk}^{\a,\b} \Delta_{t\varphi}^k (g,\cdot) }{\Lp(\Dom_{kt})} + cW_p.
\end{align*}
This completes the proof in the case $p=\infty$. If $p<\infty$, then integrating with respect to $t$ over
  $[ 2\tilde h/\sqrt{k}, 4\tilde h/\sqrt{k}]$  we get
\[
\norm{\wab g}{\Lp[\tilde b,1]}^p \le \frac{c}{\tilde h}\int_{ 2\tilde h/\sqrt{k}}^{4\tilde h/\sqrt{k}}\norm{\wt_{t k }^{\a,\b} \Delta_{t\varphi}^k (g,\cdot) }{L_p(\Dom_{kt})}^p dt +cW_p^p\le cW_p^p.
\]
The proof is now complete.
\end{proof}

We now prove a Whitney-type result for functions from $f\in\B^r_p(\wab)$, $r\in\N$.

\begin{theorem} \label{whitneythm}
Let $k\in\N$, $r\in\N$, $1\le p\le\infty$,  and let $\a,\b\in J_p$ be such that  $r/2+\a\geq 0$ and $r/2+\b\geq 0$. If $f\in\B^r_p(\wab)$, then for any $\theta\in(0,1]$,
\be \label{whitneyineq}
E_{k+r}(f)_{\wab,p} \le c \wkr(f^{(r)}, \theta)_{\a,\b,p} .
\ee
\end{theorem}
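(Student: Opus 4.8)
The plan is to reduce \thm{whitneythm} to the already-established unweighted Whitney inequality for $f^{(r)}$ and then integrate $r$ times, controlling the integration constants by means of a single polynomial of degree $k+r-1$. First I would apply \thm{thm:localwh} (or rather \cor{whcor}) with parameter $k$ replaced by $k$ to the function $f^{(r)}\in\Lpab$ — note that $r/2+\a\ge0$ and $r/2+\b\ge0$ let us compare $\wab$ with the shifted weight $\wt^{r/2+\a,r/2+\b}_{k\tau}$, which is exactly the weight appearing in $\wkr$. This produces a polynomial $q\in\Poly_k$ with
\[
\norm{\wab(f^{(r)}-q)}{p}\le c\,\w_{k,0}^\varphi(f^{(r)},\theta')_{\a,\b,p}
\]
for an appropriate $\theta'$; but since we want the right-hand side in terms of $\wkr$, we instead argue directly with the difference $\Delta^k_{h\varphi}(f^{(r)},\cdot)$ weighted by $\wt^{r/2+\a,r/2+\b}_{kh}$, so that $W_p:=\wkr(f^{(r)},\theta)_{\a,\b,p}$ appears naturally. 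The core of the argument is the weighted Whitney estimate for $f^{(r)}$ alone: there exists $q\in\Poly_k$ with $\norm{\wt^{r/2+\a,r/2+\b}(\cdot)(f^{(r)}-q)}{p}\le cW_p$ on $[-1,1]$, which one obtains by repeating the Case (i)/Case (ii) dichotomy of \thm{thm:localwh} with the shifted weight exponents.

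Next I would integrate. Let $P\in\Poly_{k+r}$ be an $r$-fold antiderivative of $q$ (so $P^{(r)}=q$), with the $r$ remaining free coefficients to be chosen. Set $g:=f-P$, so $g^{(r)}=f^{(r)}-q$ and $\norm{\wt^{r/2+\a,r/2+\b}g^{(r)}}{p}\le cW_p$. Since $\wt^{r/2+\a,r/2+\b}(x)\asymp\varphi^r(x)\wab(x)$ uniformly on $[-1,1]$ (because $r/2+\a,r/2+\b\ge0$), this reads $\norm{\varphi^r g^{(r)}}{\Lpab}\le cW_p$, i.e. $g\in\B^r_p(\wab)$ with small norm. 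Now I want to choose the free coefficients of $P$ so that $\norm{\wab g}{p}\le cW_p$ as well; then $E_{k+r}(f)_{\wab,p}\le\norm{\wab g}{p}\le cW_p$ and we are done. The mechanism is a weighted Hardy-type inequality: if $h\in\B^r_p(\wab)$ and $h$ vanishes (together with enough derivatives) at suitable interior points, then $\norm{\wab h}{p}\le c\norm{\varphi^r h^{(r)}}{\Lpab}$. Concretely, fix $r$ distinct points in $(-1,1)$ (say the Chebyshev-type nodes, or simply $0$ with multiplicity $r$), and choose the $r$ free coefficients of $P$ so that $g$ and its first $r-1$ derivatives vanish there; Taylor's formula with integral remainder then expresses $g$ via an iterated integral of $g^{(r)}$ against a kernel, and bounding the kernel in the weighted norm (using that $\varphi^{-r}$ is integrable against the relevant power weights near $\pm1$ precisely when $\a,\b\in J_p$, i.e. $\a,\b>-1/p$) yields $\norm{\wab g}{p}\le c\norm{\varphi^r g^{(r)}}{\Lpab}\le cW_p$.

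I expect the main obstacle to be the weighted Hardy/Taylor step near the endpoints $\pm1$: the remainder kernel involves $\varphi^{-r}$, which blows up at $\pm1$, and one must verify that integrating it against $\wab$ (for $1\le p\le\infty$) stays finite and gives a constant depending only on $k,r,p,\a,\b$. This is where the hypotheses $\a,\b\in J_p$ (so $\a,\b>-1/p$) and $r/2+\a,r/2+\b\ge0$ are both used — the former to make the endpoint integrals converge, the latter to pass between $\wt^{r/2+\a,r/2+\b}$ and $\varphi^r\wab$. A cleaner route, which I would try first to avoid delicate kernel estimates, is induction on $r$: having found $q\in\Poly_{k+1}$ with $\norm{\wt^{(r-1)/2+\a,(r-1)/2+\b}(f^{(r-1)}-q)}{p}\le c\w_{k+1,r-1}^\varphi(f^{(r-1)},\theta)_{\a,\b,p}$ from the inductive hypothesis applied with $r-1$ and $k+1$, one then invokes the forward (Marchaud-type / Jackson companion) inequality quoted from \cite{sam}*{Lemma 1.11} in the form $\w_{k+1,r-1}^\varphi(f^{(r-1)},\theta)_{\a,\b,p}\le c\theta\,\wkr(f^{(r)},\theta)_{\a,\b,p}$ to close the induction; the base case $r=0$ is \cor{whcor}. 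Either way, the statement follows; I would present the induction since it localizes all the analytic work into the already-proven \thm{thm:localwh} and the cited lemma from \cite{sam}.
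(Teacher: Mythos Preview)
Your primary approach---apply the Whitney estimate of \cor{whcor} to $f^{(r)}$ with the shifted exponents $r/2+\a,\,r/2+\b\ge0$, take an $r$-fold antiderivative, and control the remainder via Taylor's formula with integral remainder and H\"older's inequality---is exactly what the paper does. A couple of cosmetic points: the identity $\varphi^r\wab=w_{r/2+\a,\,r/2+\b}$ is exact (not merely $\asymp$) and does not itself require $r/2+\a,\,r/2+\b\ge0$; that hypothesis is needed so that \cor{whcor}, which demands nonnegative exponents, applies to $f^{(r)}$. The paper takes the simplest choice of integration constants, namely matching the Taylor coefficients of $f$ at the single point $0$ (your ``$0$ with multiplicity $r$'' option), and then carries out the endpoint integrability check by splitting into three cases according to the sign of $q(r/2-\a-1)+1$, where $q=p/(p-1)$.

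Your alternative induction on $r$, however, has a genuine gap. Both the inductive hypothesis (\thm{whitneythm} with $r-1$ and $k+1$) and \cite{sam}*{Lemma~1.11} applied at level $r-1$ require $(r-1)/2+\a\ge0$ and $(r-1)/2+\b\ge0$. If, say, $\a=-r/2$ (which is allowed by the hypotheses when $r\ge1$), then $(r-1)/2+\a=-1/2<0$ and the induction step is not available. So the inductive route does not cover the full stated range of $\a,\b$; you should stay with the direct Taylor-remainder argument.
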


\begin{proof}
Note that $f\in\B_p^r(\wab)$ implies that $f^{(r)}\in L_p^{r/2+\a, r/2+\b}$, and so it follows from \ineq{ineq:regwh}  that
\[
E_k(f^{(r)})_{w_{r/2+\a,r/2+\b,p}} \le c\omega_{k,0}^\varphi(f^{(r)}, \theta)_{r/2+\a,r/2+\b,p}=c W_{r,p},
\]
where $W_{r,p}:= \wkr(f^{(r)},  \theta)_{\a,\b,p}$.

Let
$\tilde P_k\in\Poly_k$ be a polynomial  such that
\[
\norm{w_{r/2+\a,r/2+\b}(f^{(r)}-\tilde P_k)}{p} <c W_{r,p},
\]
and define $P_{k+r}\in \Poly_{k+r}$ by
\[
P_{k+r}(x) :=f(0)+\frac{f'(0)}{1!}x+\dots+ \frac{f^{(r-1)}(0)}{(r-1)!}x^{r-1}+\frac1{(r-1)!}\int_0^x(x-u)^{r-1}\tilde P_k(u)du.
\]
Assuming that $x\ge 0$ (for $x<0$ all estimates are analogous), we have by H\"older's inequality
\begin{eqnarray*}
\lefteqn{(r-1)!\left|f(x)-P_{k+r}(x)\right|}\\
&\le&
\int_0^x(x-u)^{r-1}\left|f^{(r)}(u)-\tilde P_k(u)\right|du\\
&=&
\int_0^x\frac{(x-u)^{r-1}}{w_{r/2+\a,r/2+\b}(u)}w_{r/2+\a,r/2+\b}(u)|f^{(r)}(u)-\tilde P_k(u))|du\\
&\le&
 c A_q(x) W_{r,p},
\end{eqnarray*}
where $q:=p/(p-1)$,
\[
A_q(x):= \left(\int_0^x\left(\frac{(x-u)^{r-1}}{w_{r/2+\a,r/2+\b}(u)}\right)^q du\right)^{1/q}, \quad \text{if} \quad q<\infty,
\]
and
\[
A_\infty(x):= \sup_{u\in \left[0, x \right]}  \left(\frac{(x-u)^{r-1}}{w_{r/2+\a,r/2+\b}(u)}\right) .
\]
Now, since
\[
\frac{(x-u)^{r-1}}{w_{r/2+\a,r/2+\b}(u)}\le \frac{(x-u)^{r-1}}{(1-u)^{r/2+\a}}\le  (1-u)^{r/2-\a-1},
\]
we have
\[
A_q^q(x)\le  \int_0^x(1-u)^{q(r/2-\a-1)}du \andd A_\infty(x) \le  \sup_{u\in \left[0, x \right]} (1-u)^{r/2-\a-1}.
\]
If $q<\infty$ and $q(r/2-\a-1)>-1$, then
\[
A_q^q(x)\le \int_0^1(1-u)^{q(r/2-\a-1)}du=c,
\]
which yields
$$
\|f-P_{k+r}\|_{L_\infty[0,1]}\le c  W_{r,p},
$$
and hence
\begin{align} \label{123321}
\|w_{\a,\b}(f-P_{k+r})\|_{L_p[0,1]}&\le c  W_{r,p} \|w_{\a,\b}\|_{L_p[0,1]}
 \le cW_{r,p},
\end{align}
where we used the fact that $\a \in J_p$. Similarly, \ineq{123321} holds if $q=\infty$ ($p=1$) and $r/2-\a-1 \geq 0$.

 If $q<\infty$ and $q(r/2-\a-1)<-1$, then
\[
A_q^q(x) \le c(1-x)^{q(r/2-\a-1)+1},
\]
and so, recalling that $x\ge 0$,  we have
\[
\wab (x) A_q(x)\le c(1-x)^{r/2-1/p}.
\]
Hence,
\be \label{135}
\|w_{\a,\b}(f-P_{k+r})\|_{L_p[0,1]}\le c \|w_{\a,\b}A_q\|_{L_p[0,1]} W_{r,p} \le c W_{r,p}.
\ee
 Similarly, one shows that \ineq{135} holds if
$q=\infty$ ($p=1$) and $r/2-\a-1 < 0$.

 It remains to consider the case $q<\infty$ and $q(r/2-\a-1)=-1$. We have
\[
A_q^q(x)\le  \int_0^x(1-u)^{-1}du= c|\ln(1-x)|,
\]
and so
\[
w_{\a,\b}(x) A_q(x)\le c(1-x)^{\a}|\ln(1-x)|^{1/q} .
\]
For $p<\infty$, since $\a p> -1$, we have
\[
\norm{\wab A_q}{L_p[0,1]}^p \le  c  \int_0^1 (1-x)^{\a p}|\ln(1-x)|^{p-1}  dx < c.
\]
Finally, if $p=\infty$, then $q=1$ and $\a = r/2>0$, and so $\norm{\wab A_1}{L_\infty[0,1]} < c$.
Hence, \ineq{135} holds in this case as well.

Similarly, one shows  that
\[
\|\wab(f-P_{k+r})\|_{L_p[-1,0]}\le cW_{r,p},
\]
and the proof is complete.
\end{proof}

\sect{Direct estimates: proof of Theorems~\ref{direct} and \ref{thm2direct}}\label{sec5}

The following lemma is \cite{sam}*{Corollary 4.4} with $r=0$.
\begin{lemma} \label{jacklemma}
Let $k \in\N$,    $ \a\geq 0$, $ \b\geq 0$  and $f\in\Lpab$, $0< p \leq \infty$.
 Then, there exists $N\in\N$ depending on $k$,   $p$, $\a$ and $\b$, such that
 for every $n \geq N$ and $0<\vartheta \leq 1$,  there is a polynomial $P_n \in\Poly_n$ satisfying
\[
\norm{\wab  (f -P_n )}{p}  \leq c   \w_{k,0}^{*\varphi}(f , \vartheta/n)_{\a,\b,p}
\leq c   \w_{k, 0}^\varphi(f ,   \vartheta/n)_{\a,\b,p},
\]
and
\[
n^{-k} \norm{\wab \varphi^{k} P_n^{(k)}}{p}  \leq c \w_{k,0}^{*\varphi}(f, \vartheta/n)_{\a,\b,p}
\leq c   \w_{k, 0}^\varphi(f,  \vartheta/n)_{\a,\b,p}  ,
\]
where constants $c$  depend only on  $k$,  $p$, $\a$, $\b$ and $\vartheta$.
\end{lemma}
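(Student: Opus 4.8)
The statement to be proven (Lemma~\ref{jacklemma}) is quoted as \cite{sam}*{Corollary 4.4} with $r=0$, so strictly speaking the ``proof'' is a citation. Nonetheless, here is how I would reconstruct the argument if one wanted a self-contained derivation from the Whitney-type estimates of Section~3.

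\textbf{Overall strategy.} The plan is to build the near-best polynomial $P_n$ by a partition-of-unity patching of local polynomials, using the fact (from \cref{thm:localwh}, \cref{whcor}, \cref{corwh}) that on each Chebyshev-type subinterval of $[-1,1]$ the function $f$ is approximated by a polynomial of degree $<k$ with error controlled by the local averaged modulus $\w_{k,0}^{*\varphi}(f,\vartheta/n)_{\a,\b,p}$. One then glues these local polynomials together against a smooth partition of unity subordinate to the covering, and replaces the resulting (non-polynomial, piecewise) function by a genuine polynomial of degree $\le n$ using a standard de~la~Vall\'ee~Poussin/Dzjadyk-type kernel. Throughout, the weight $\wab$ must be tracked carefully, which is where the condition $\a,\b\ge 0$ enters decisively.

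\textbf{Key steps in order.} First, fix $n$ and introduce the Chebyshev knots $x_j := \cos(j\pi/n)$, $0\le j\le n$, and the associated intervals $I_j := [x_j, x_{j-1}]$; on each $I_j$ one has $|I_j|\asymp \varphi(x)/n$ for $x\in I_j$ away from the endpoints, and $|I_j|\asymp n^{-2}$ near $\pm 1$, which is exactly the scale $h\varphi(x_0)$ appearing in \cref{thm:localwh} with $h\asymp 1/n$. Second, for the interior intervals apply \cref{thm:localwh} (with a suitable $\theta$ absorbed into $\vartheta$) to get $p_j\in\Poly_k$ with $\norm{\wab(f-p_j)}{\Lp(I_j)}\le c\,\w_{k,0}^{*\varphi}(f,\vartheta/n)_{\a,\b,p}$; for the two intervals touching $\pm1$ use \cref{corwh} instead. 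Third, take a $C^\infty$ partition of unity $\{\psi_j\}$ with $\supp\psi_j$ contained in a fixed dilate of $I_j$, $\sum_j\psi_j\equiv 1$ on $[-1,1]$, and $\norm{\psi_j^{(i)}}{\infty}\le c\,|I_j|^{-i}$, and set $g:=\sum_j \psi_j p_j$. Using the overlap-boundedness of the covering and the pointwise bound on $|f-p_j|$ weighted by $\wab$ (here one needs that on the dilate of $I_j$ the weight $\wab$ is comparable to its value on $I_j$, which holds because $\a,\b\ge0$ keeps the weight monotone/doubling in the right direction), one gets $\norm{\wab(f-g)}{p}\le c\,\w_{k,0}^{*\varphi}(f,\vartheta/n)_{\a,\b,p}$ and also, by differentiating $g=\sum\psi_j(p_j-p_{j_0})+p_{j_0}$ on each $I_j$ and using a Markov/Bernstein inequality for the degree-$k$ polynomials $p_j-p_{j_0}$ on adjacent intervals, the derivative bound $n^{-k}\norm{\wab\varphi^k g^{(k)}}{p}\le c\,\w_{k,0}^{*\varphi}(f,\vartheta/n)_{\a,\b,p}$. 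Finally, replace $g$ by a polynomial $P_n\in\Poly_n$: write $g$ as a piecewise polynomial of degree $\le k-1+(\text{deg }\psi_j)$ — actually $g$ is $C^\infty$ but not polynomial — and instead approximate $g$ by $P_n = V_n(g)$, a de~la~Vall\'ee~Poussin-type polynomial operator bounded in weighted $\Lp$ (this is where one invokes the boundedness of such operators with Jacobi weights, a standard ingredient, cf. the machinery in \cite{dt} and \cite{sam}); one checks $\norm{\wab(g-P_n)}{p}\le c\,\norm{\wab(f-g)}{p}$ plus a tail, and that $V_n$ preserves the $\varphi^k$-weighted $k$th-derivative bound up to constants.

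\textbf{Main obstacle.} The delicate point — and the reason \cite{sam} needs a genuine argument rather than a one-line patching — is the last step: producing an honest polynomial of degree $\le n$ out of the locally-patched function while \emph{simultaneously} keeping control of both $\norm{\wab(f-P_n)}{p}$ and $n^{-k}\norm{\wab\varphi^kP_n^{(k)}}{p}$, for the full range $0<p\le\infty$ including $p<1$ where one only has a quasi-norm and the triangle inequality degrades to a $p$-triangle inequality. For $p<1$ the partition-of-unity sum must be handled with the $\ell^p$-summation of the local errors (using bounded overlap so the number of nonzero terms at each point is $O(1)$), and the weighted-$\Lp$ boundedness of the polynomial smoothing operator for $p<1$ and Jacobi weights is itself nontrivial and is really the content being imported from \cite{sam}. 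I would therefore, in a streamlined write-up, simply cite \cite{sam}*{Corollary 4.4}; the sketch above indicates why that corollary holds and which of its ingredients (Whitney estimates of Section~3 above, weighted polynomial approximation operators, Markov--Bernstein inequalities with Jacobi weights) are doing the work.
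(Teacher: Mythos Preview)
Your first sentence already matches the paper exactly: the paper does not prove this lemma at all but simply states it as \cite{sam}*{Corollary 4.4} with $r=0$, so the ``proof'' in the paper is precisely the citation you identify. Everything after your first sentence is additional commentary not present in the paper; as a heuristic sketch it is reasonable, but note that the actual argument in \cite{sam} does not proceed via the Whitney estimates of Section~3 of the present paper (those are proved here independently and used only for the small-$n$ range in the proof of \thm{direct}), so your reconstruction, while plausible in outline, is not the route taken in the cited source.
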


\begin{proof}[Proof of $\thm{direct}$]
Estimate \ineq{dir} immediately follows from \lem{jacklemma} for $n\ge N$. For $k\le n< N$, \ineq{dir} follows from  \cor{whcor} with $\theta:=1/N$, since
\[
E_n(f)_{\a,\b,p}\le  E_k(f)_{\a,\b,p}\le  c \w_{k,0}^\varphi (f,1/N)_{\a,\b,p} \le     c \w_{k,0}^\varphi (f,1/n)_{\a,\b,p}.
\]
\end{proof}

\begin{remark}
In the case $1\le p \le \infty$, it was shown by Ky \cite{ky}*{Theorem 4} $($see also Luther and Russo \cite{lr}$)$ that if $\a,\b\ge0$, then
\be \label{ky}
E_n(f)_{w_{\a,\b},p}\le c  \omega^k_\varphi(f,1/n)_{w_{\a,\b},p}\,,\quad n\ge n_0.
\ee
By virtue of \cite{sam}*{(2.2)}, we have, for $1\le p \le \infty$,
\be\label{claim}
\wkr(f^{(r)},t)_{\a,\b,p}\sim\omega^k_\varphi( f^{(r)},t)_{w_{\a,\b}\varphi^r,p}, \quad 0<t\leq t_0.
\ee
Thus, in the case $1\le p \le \infty$, \ineq{dir} with $n\geq n_0$ follows from \ineq{ky}.
We also remark that, even though \ineq{ky} was stated with $n_0=k$ in \cite{ky}, the proof used \cite{dt}*{Theorem 6.1.1} where $0<t\leq t_0$, and so was only justified for sufficiently large $n$.
\end{remark}

\begin{proof}[Proof of $\thm{thm2direct}$]
The case $r=0$ is \thm{direct}. Thus we may assume that $r\geq 1$. It follows by \cite{dt}*{Theorem 8.2.1 and (6.3.2)} that, for $n\ge n_0$,
\begin{align}\label{estim}
E_n(f)_{w_{\a,\b},p}&\le c\int_0^{1/n}(\Omega^{k+r}_\varphi(f,t)_{w_{\a,\b},p}/t)dt\\&\le c\int_0^{1/n}t^{r-1}\Omega^k_\varphi(f^{(r)},t)_{w_{\a,\b}\varphi^r,p}\,dt\nonumber\\&\le\frac c{n^r}\Omega^k_\varphi(f^{(r)},1/n)_{w_{\a,\b}\varphi^r,p}\le\frac c{n^r}\omega^k_\varphi(f^{(r)},1/n)_{w_{\a,\b}\varphi^r,p}\,,\nonumber
\end{align}
and so \ineq{dir} follows by \ineq{claim}.
For $k+r\le n < n_0$, \ineq{dir} immediately follows from \thm{whitneythm} with $\theta:=1/n_0$, as above.
This completes the proof.
\end{proof}

\sect{Inverse theorem: proof of \thm{inverse}}\label{sec4}

We first prove this theorem in the case $r\ge 1$.

For the proof we need the following fundamental inequality   (see  \cites{kha, pot} as well as \cite{dt}*{(8.1.3)}):  given $\a,\b\in J_p$, $1\le p\le\infty$, we have
\be\label{pt}
\norm{ \wab \varphi^rp^{(r)}_n}{p}\le c(r,p, \a, \b)n^r \norm{\wab p_n}{p},\quad p_n\in\Pn.
\ee
Let $f\in \Lpab$ and let $P_n\in\Pn$ be a polynomial of   best approximation of $f$ in $\Lpab$. Then
$E_n(f)_{w_{\a,\b},p} = \|\wab(f-P_n)\|_{p}$, $n\ge1$. 

Throughout the proof, we often use the estimate
\begin{align} \label{twoj}
\lefteqn{ \sum_{j=l}^m ( 2^j N)^\nu E_{2^jN}(f)_{w_{\a,\b},p}}\\ \nonumber
 &\leq
(1+2^\nu)  \sum_{j=l}^{m-1} ( 2^j N)^\nu E_{2^jN}(f)_{w_{\a,\b},p}
 \\ \nonumber
 &\leq
(1+2^\nu) 2^\nu \sum_{j=l}^{m-1}    \sum_{n=2^{j-1}N +1}^{2^{j}N } n^{\nu-1}   E_{n}(f)_{w_{\a,\b},p}\\ \nonumber
 &=
(1+2^\nu) 2^\nu \sum_{n=2^{l-1} N +1}^{2^{m-1} N} n^{\nu-1}   E_{n}(f)_{w_{\a,\b},p},
\end{align}
where $\nu\geq 1$ and $1\leq l < m$, which is also valid if $m=\infty$.

We  represent $f$ as the telescopic series
\be \label{teles}
f=P_{k+r}+(P_N-P_{k+r})+\sum_{j=0}^\infty\left(P_{2^{j+1}N}-P_{2^j N}\right)=:P_{k+r}+Q+\sum_{j=0}^\infty Q_j .
\ee
Since
\be \label{intineq}
\|\wab Q_j\|_{p}\le\left\|\wab(P_{2^{j+1}N}-f)\right\|_{p}+\left\|\wab(f-P_{2^jN})\right\|_{p}\le cE_{2^jN}(f)_{\wab,p},
\ee
we have by virtue of  \ineq{pt} and \ineq{rcond}, for each $1\le\nu\le r$,
\begin{eqnarray*}
\sum_{j=0}^\infty \norm{\wab \varphi^{\nu}Q_j^{(\nu)} } {p} &\le& c\sum_{j=0}^\infty ( 2^{j+1}N)^\nu E_{2^jN}(f)_{w_{\a,\b},p}\\
&\leq &
c N^\nu E_{ N}(f)_{w_{\a,\b},p} + c \sum_{n=N+1}^\infty  n^{\nu-1}  E_{n}(f)_{w_{\a,\b},p}
<\infty.
\end{eqnarray*}

By the same argument as in the proof of \cite{kls}*{Theorem 9.1},
it follows that almost everywhere $f(x)$ is identical with a function possessing an absolutely continuous derivative of order $(r-1)$ and $f^{(r)} \in \Lp[-1+\e, 1-\e]$, for any $\e>0$.
In particular, differentiation of \ineq{teles} is justified, and $f\in \B_p^r(\wab)$.

By \cite{sam}*{Lemma 4.1}, since $r/2+\a\geq 0$ and   $r/2+\b \geq 0$,   we have
\[
\wkr(Q_j^{(r)},t)_{\a,\b,\,p}\le c\norm{ \wab \varphi^rQ_j^{(r)} }{p}
\]
and
\[
\wkr(Q_j^{(r)},t)_{\a,\b,\,p}\le ct^k \norm{\wab \varphi^{r+k}Q_j^{(r+k)} }{p}.
\]
Hence, by \ineq{pt} and \ineq{intineq} we obtain
\[
\wkr\bigl(Q_j^{(r)},t\bigr)_{\a,\b,p}\le c( 2^{j+1} N)^r \norm{\wab Q_j}{p}\le c (2^jN)^r    E_{ 2^j N}(f)_{\wab,p}
\]
and
\[
\wkr\bigl(Q_j^{(r)},t\bigr)_{\a,\b,p}\le ct^k(2^{j+1} N)^{r+k} \norm{
\wab Q_j}{p}\le c t^k  (2^j N)^{r+k}     E_{2^j N}(f)_{\wab,p}.
\]

Denoting $J:=\min\{j\in\N_0: 2^{-j} \le N t\}$ (note that  $2^{-J} \le Nt < 2^{-J+1}$ if $J\geq 1$, and   $Nt\geq 1$ if $J=0$)
we now have by \ineq{twoj}
\begin{eqnarray}\label{app1}
\wkr\biggl(\sum_{j=J+1}^\infty Q_j^{(r)},t\biggr)_{\a,\b,p}
&\le&c\sum_{j=J+1}^\infty\wkr\bigl(Q_j^{(r)},t\bigr)_{\a,\b,p}\\
&\le&c\sum_{j=J+1}^\infty (2^j N)^r  E_{2^jN}(f)_{w_{\a,\b},p}\nonumber\\
&\le&c\sum_{n=2^{J} N+1}^\infty n^{r-1}E_n(f)_{w_{\a,\b},p}\nonumber\\ \nonumber
&\le&
c \sum_{n>\max\{N,1/t\}} n^{r-1}E_n(f)_{w_{\a,\b},p},
\end{eqnarray}
since $2^J N+1 > \max\{N,1/t\}$.
Now,
  if $J\geq 2$, then \ineq{twoj} implies
\begin{eqnarray} \label{app2}
\lefteqn{ \wkr\biggl( \sum_{j=0}^J Q_j^{(r)},t\biggr)_{\a,\b,p}
 \le ct^k\sum_{j=0}^J  (2^j N)^{r+k}      E_{2^j N}(f)_{w_{\a,\b},p} }\\ \nonumber
&\le& c t^k N^{r+k} E_{N}(f)_{w_{\a,\b},p} +  ct^k\sum_{n=N+1}^{2^{J-1} N}n^{r+k-1}E_n(f)_{w_{\a,\b},p} \\ \nonumber
&\le&
c t^k N^{r+k} E_{N}(f)_{w_{\a,\b},p} +
ct^{k}\sum_{N \leq n\leq\max\{N,1/t\}}n^{k+r-1}E_n(f)_{w_{\a,\b},p} ,
\end{eqnarray}
where we used the fact that  $2^{J-1}N \leq \max\{N,1/t\}$.
If $J=0$ or $1$, then we have
\[ 
\wkr\biggl( \sum_{j=0}^J  Q_j^{(r)},t\biggr)_{\a,\b,p} \le  ct^k  N^{r+k}      E_{N}(f)_{w_{\a,\b},p}, 
\]
and so the last estimate in \ineq{app2} is valid in this case as well.

Finally, if $N\geq k+r$, then
\begin{align}\label{app3}
\wkr(P_{k+r}^{(r)}+ Q^{(r)}, t)_{\a,\b,p} &=
\wkr(Q^{(r)}, t)_{\a,\b,p} \le ct^k \norm{\wab \varphi^{k+r}Q^{(k+r)} }{p} \\ & \le ct^k N^{r+k} \norm{ \wab Q }{p}
 \le ct^k N^{r+k}E_{k+r}(f)_{w_{\a,\b},p},\nonumber
\end{align}
and if $N<k+r$, then $\wkr(P_{k+r}^{(r)}+ Q^{(r)}, t)_{\a,\b,p} = 0$, so that we don't need \ineq{app3}.

Combining \ineq{app1}-\ineq{app3} and using the fact that, if $N\geq k+r$, then $E_{N}(f)_{w_{\a,\b},p} \leq E_{k+r}(f)_{w_{\a,\b},p}$ and, if $N<k+r$, then the first term in the last inequality in \ineq{app2} can be absorbed by the second term in that inequality,
we obtain \ineq{inverse1}, and our proof is complete in the case $r\geq 1$.

Suppose now that $r=0$. We  represent $f$ as
\be \label{teleszero}
f= 
 P_{k}+Q+\sum_{j=0}^J Q_j + \left(f-P_{2^{J+1}N}\right) ,
\ee
where $Q:= P_N-P_{k}$ and $Q_j := P_{2^{j+1}N}-P_{2^j N}$, and estimate the last term.
We have
\[
\norm{\wab(f-P_{2^{J+1}N})}{p} \leq c E_{ 2^{J+1} N}(f)_{\wab,p},
\]
and in the case $J=0$ or $1$, we use the fact that $Nt\geq c$, to conclude
\[
E_{ 2^{J+1} N}(f)_{\wab,p} \le E_{N}(f)_{\wab,p} = N^k t^k (Nt)^{-k} E_{N}(f)_{\wab,p} \le c(N) t^k  E_N(f)_{\wab,p}.
\]
If $J\ge 2$, we recall that $2^{J-1} N < 1/t \le 2^{J} N$, so that $\max\{N,1/t\} = 1/t$, and write
\begin{eqnarray*}
E_{ 2^{J+1} N}(f)_{\wab,p} &\le& (2^{J-2}N)^{-1}\sum_{n=2^{J-2} N+1}^{2^{J-1} N} E_{n}(f)_{\wab,p} \\
&\le&
(2^{J-2}N)^{-k}  \sum_{n=2^{J-2} N+1}^{2^{J-1} N} n^{k-1} E_{n}(f)_{\wab,p}\\
&\le&
4^k t^k \sum_{N\le n <1/t} n^{k-1} E_{n}(f)_{\wab,p}.
\end{eqnarray*}
It now remains to apply  \ineq{app2} and \ineq{app3} with $r=0$, in order to complete the proof of \ineq{inverse1} in the case $r=0$.
\qed

\sect{Appendix}

The following sharp Marchaud inequality was proved in \cite{dd}.

\begin{theorem}[\mbox{\cite{dd}*{Theorem 7.5}}]
For $m \in\N$, $1<p<\infty$ and $\a,\b\in J_p$,  we have
\[
K_{m,\varphi}(f,t^m)_{\wab,p} \leq C t^m \left( \int_t^1 \frac{K_{m+1,\varphi}(f,u^{m+1})_{\wab,p}^q}{u^{mq+1}}\, du + E_m(f)_{\wab,p}^q \right)^{1/q}
\]
and
\[
K_{m,\varphi}(f,t^m)_{\wab,p} \leq C t^m \left( \sum_{n <1/t} n^{qm-1} E_n(f)_{\wab,p}^q \right)^{1/q} ,
\]
where $q = \min(2,p)$.
\end{theorem}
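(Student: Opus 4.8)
The plan is to reduce the two displayed inequalities to a single discrete statement and then prove that statement by a Littlewood--Paley dyadic decomposition. First I would record the two standard ingredients in the weighted Jacobi setting: the realization of the $K$-functional by polynomials, $K_{m,\varphi}(f,t^m)_{\wab,p}\sim\w^m_\varphi(f,t)_{\wab,p}$ with $n\sim1/t$ near-best approximants, and the Bernstein inequality \ineq{pt}, $\norm{\wab\varphi^m p_n^{(m)}}{p}\le cn^m\norm{\wab p_n}{p}$. Combining the Jackson estimate $E_n(f)_{\wab,p}\le cK_{m+1,\varphi}(f,n^{-(m+1)})_{\wab,p}$ (a consequence of \thm{direct} together with \ineq{claim}) with the routine comparison of a monotone sum to its integral (via $u\sim1/n$, $du\sim n^{-2}\,dn$) shows that $\sum_{n<1/t}n^{qm-1}E_n(f)_{\wab,p}^q$ is dominated by $\int_t^1 K_{m+1,\varphi}(f,u^{m+1})_{\wab,p}^q u^{-mq-1}\,du+E_m(f)_{\wab,p}^q$. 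Hence the integral form follows from the discrete form, and it suffices to prove
\[
K_{m,\varphi}(f,t^m)_{\wab,p}\le ct^m\Bigl(\sum_{n<1/t}n^{qm-1}E_n(f)_{\wab,p}^q\Bigr)^{1/q},\quad q=\min(2,p).
\]

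To prove this I would take near-best polynomial approximants $P_{2^j}$, form the dyadic blocks $\eta_j:=P_{2^{j+1}}-P_{2^j}$, and, with $J:=\lfloor\log_2(1/t)\rfloor$, use that $\Lambda f:=P_{2^{J+1}}$ realizes the $K$-functional: $K_{m,\varphi}(f,t^m)_{\wab,p}\le c\norm{\wab(f-\Lambda f)}{p}+ct^m\norm{\wab\varphi^m(\Lambda f)^{(m)}}{p}$. The first term is $\le cE_{2^J}(f)_{\wab,p}$ and is absorbed into the sum. For the second, writing $(\Lambda f)^{(m)}=\sum_{j\le J}\eta_j^{(m)}$, I would invoke the square-function equivalence $\norm{\wab\varphi^m\sum_j\eta_j^{(m)}}{p}\sim\norm{\wab(\sum_j|\varphi^m\eta_j^{(m)}|^2)^{1/2}}{p}$, valid for blocks with dyadically separated Jacobi spectra. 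Applying \ineq{pt} blockwise gives $\norm{\wab\varphi^m\eta_j^{(m)}}{p}\le c2^{jm}\norm{\wab\eta_j}{p}\le c2^{jm}E_{2^{j-1}}(f)_{\wab,p}$, and then the elementary pointwise comparison of the $\ell^2$ and $\ell^q$ norms---$(\sum|a_j|^2)^{1/2}\le(\sum|a_j|^q)^{1/q}$ when $q=p\le2$, and Minkowski's inequality in $L_{p/2}$ when $q=2\le p$---converts the square function into $(\sum_{j\le J}\|\wab\varphi^m\eta_j^{(m)}\|_p^q)^{1/q}$. Summing the weights $2^{jm}$ against $t^m\sim2^{-Jm}$ and passing from the dyadic sum back to $\sum_{n<1/t}n^{qm-1}E_n^q$ by the dyadic summation estimate \ineq{twoj} (applied with $E_n$ replaced by $E_n^q$ and exponent $qm$) completes the argument.

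The crux, and the only genuinely deep step, is the weighted Littlewood--Paley square-function equivalence for Jacobi-polynomial blocks; this is exactly where the hypotheses $1<p<\infty$ and the sharp exponent $q=\min(2,p)$ enter, and where the improvement $\ell^q$ over the $\ell^1$ of the classical Marchaud inequality originates. Its proof rests on harmonic analysis of the Jacobi differential operator---a Mikhlin--H\"ormander-type multiplier theorem for Jacobi expansions, equivalently the $A_p$/UMD property of the measure $\wab^p\,dx$ on $[-1,1]$, which is guaranteed precisely by $\a,\b\in J_p=(-1/p,\infty)$ ensuring $(1-x)^{\a p}(1+x)^{\b p}$ is a Muckenhoupt weight. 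I expect this multiplier/square-function estimate to be the main obstacle; once it is available, the remaining steps (realization, Bernstein via \ineq{pt}, the $\ell^2$--$\ell^q$ comparison, and the sum-to-integral conversion) are standard and parallel the unweighted treatment.
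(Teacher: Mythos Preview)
The paper does not prove this theorem; it is quoted in the Appendix directly from \cite{dd}*{Theorem 7.5} as a known result, with no argument given. So there is no proof in this paper to compare your proposal against.

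That said, your outline is essentially the strategy of \cite{dd}: the sharp exponent $q=\min(2,p)$ arises exactly from the weighted Littlewood--Paley square-function inequality for dyadic Jacobi blocks, and the remaining steps (realization of the $K$-functional, the Bernstein inequality \ineq{pt}, the $\ell^2$--$\ell^q$ passage, and the dyadic-to-integral conversion) are routine once that is available. One point worth making explicit: the Littlewood--Paley equivalence applies to \emph{spectral} dyadic blocks, so your ``near-best'' $P_{2^j}$ should be de la Vall\'ee-Poussin--type means (or other multiplier operators) whose Jacobi spectrum is genuinely dyadically localized, not the polynomials of best approximation; you seem aware of this (``dyadically separated Jacobi spectra''), but the choice should be stated. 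Also, the Jackson estimate you cite from \thm{direct} is proved there only for $\a,\b\ge 0$; for the full range $\a,\b\in J_p$ one appeals to the weighted Jackson theorem in \cite{dt} or to \cite{dd} itself.
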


\begin{corollary}
For   $1<p<\infty$, $r\in\N_0$, $m \in\N$, $r/2+\a\geq 0$, $r/2+\b\ge 0$ and $f\in\B^r_p(\wab)$, we have
\[
\omega^{\varphi}_{m,r} (f^{(r)},t)_{\a,\b,p} \leq C t^m \left( \int_t^1\frac{\omega^{\varphi}_{m+1,r}(f^{(r)},u)_{\a,\b, p}^q}{u^{mq+1}}\, du + E_m(f^{(r)})_{\wab\varphi^r,p}^q \right)^{1/q}
\]
and
\[
\omega^{\varphi}_{m,r}(f^{(r)},t)_{\a,\b, p} \leq C t^m \left( \sum_{n <1/t} n^{qm-1} E_n(f^{(r)})_{\wab \varphi^r,p}^q \right)^{1/q} ,
\]
where $q = \min(2,p)$.
\end{corollary}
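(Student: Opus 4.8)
The plan is to obtain both inequalities directly from the sharp Marchaud inequality quoted above, applied to $f^{(r)}$ with an appropriate choice of Jacobi weight, and then to translate $K$-functionals into the moduli $\omega^\varphi_{m,r}$. The starting observation is that
\[
\wab(x)\,\varphi^r(x)=(1-x)^{r/2+\a}(1+x)^{r/2+\b}=w_{r/2+\a,\,r/2+\b}(x),
\]
and that the hypotheses $r/2+\a\ge 0$, $r/2+\b\ge 0$ guarantee $r/2+\a,\,r/2+\b\in J_p$ (as $J_p\supseteq[0,\infty)$ for every $p$). Moreover, $f\in\B^r_p(\wab)$ is, by definition, the statement that $f^{(r)}\in L_p^{r/2+\a,\,r/2+\b}$. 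Hence the quoted theorem applies verbatim to the function $f^{(r)}$ with $\wab$ replaced by $w_{r/2+\a,\,r/2+\b}=\wab\varphi^r$, giving
\[
K_{m,\varphi}(f^{(r)},t^m)_{\wab\varphi^r,p}\le Ct^m\Bigl(\int_t^1\frac{K_{m+1,\varphi}(f^{(r)},u^{m+1})_{\wab\varphi^r,p}^q}{u^{mq+1}}\,du+E_m(f^{(r)})_{\wab\varphi^r,p}^q\Bigr)^{1/q}
\]
and the corresponding discrete estimate, with $q=\min(2,p)$.

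The next step is to replace each $K$-functional by the corresponding weighted modulus. By the classical Ditzian--Totik equivalence of the weighted $K$-functional and the weighted modulus of smoothness (see \cite{dt}), applied with $g:=f^{(r)}$ and weight $\wab\varphi^r$, one has $K_{j,\varphi}(f^{(r)},u^{j})_{\wab\varphi^r,p}\sim\omega^j_\varphi(f^{(r)},u)_{\wab\varphi^r,p}$ for $j\in\{m,m+1\}$. Then \eqref{claim} (valid here since $r/2+\a,\,r/2+\b\ge 0$ and $1<p<\infty$) yields $\omega^j_\varphi(f^{(r)},u)_{\wab\varphi^r,p}\sim\omega^\varphi_{j,r}(f^{(r)},u)_{\a,\b,p}$ for $u$ in a fixed interval $(0,t_0]$. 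Substituting these equivalences into the two inequalities from the previous step — the error terms $E_m(f^{(r)})_{\wab\varphi^r,p}$ and $E_n(f^{(r)})_{\wab\varphi^r,p}$ already being in final form — produces exactly the two asserted estimates, at least for $0<t\le t_0$.

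Finally, one disposes of the range $t>t_0$: there $\omega^\varphi_{m,r}(f^{(r)},t)_{\a,\b,p}$ is nondecreasing in $t$ and eventually constant (it equals its value at $2/m$ for $t\ge 2/m$, as noted after \eqref{dom}), whereas the right-hand sides, restricted to $t>t_0$, are bounded below by a positive multiple of $\omega^\varphi_{m,r}(f^{(r)},t_0)_{\a,\b,p}$; hence the inequalities for $t>t_0$ follow from the case $t=t_0$ after enlarging $C$. The only point that requires genuine attention is verifying that the shift of weight $\wab\mapsto\wab\varphi^r$ keeps us within the hypotheses of the quoted theorem (the parameters stay in $J_p$, and $p$ is untouched) and invoking \eqref{claim} and the $K$-functional/modulus equivalence with this shifted weight; everything else is bookkeeping.
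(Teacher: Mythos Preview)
Your proposal is correct and matches the paper's intended argument: the corollary is stated without proof precisely because it is obtained from the quoted Dai--Ditzian theorem by applying it to $f^{(r)}$ with the shifted Jacobi weight $\wab\varphi^r=w_{r/2+\a,\,r/2+\b}$ (whose exponents lie in $J_p$ by hypothesis) and then invoking the equivalence \eqref{claim} together with the Ditzian--Totik $K$-functional/modulus equivalence. Your handling of the residual range $t>t_0$ is adequate for this purpose.
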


The following sharp Jackson inequality was proved in \cite{ddt}.

\begin{theorem}[\mbox{\cite{ddt}*{Theorem 6.2}}]
For $1<p<\infty$, $\a, \b \in J_p$ and $m \in\N$, we have
\[
2^{-nm} \left( \sum_{j=j_0}^n 2^{mjs} E_{2^j}(f)^s_{\wab,p} \right)^{1/s} \leq C K_{m,\varphi}(f, 2^{-nm})_{\wab,p}
\]
and
\[
2^{-nm} \left( \sum_{j=j_0}^n 2^{mjs}  K_{m+1,\varphi}(f, 2^{-j(m+1)})_{\wab,p}^s    \right)^{1/s} \leq C K_{m,\varphi}(f, 2^{-nm})_{\wab,p} ,
\]
where $2^{j_0} \geq m$ and $s = \max(p,2)$.
\end{theorem}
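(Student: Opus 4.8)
This theorem is quoted verbatim from \cite{ddt}*{Theorem~6.2}, so no proof is given here; we only indicate how one would argue, since the scheme is the ``sharp'' counterpart of the elementary telescoping already carried out in Section~\ref{sec4}. The plan is, first, to realize the $K$-functional by polynomial approximation: for $1<p<\infty$ and $\a,\b\in J_p$ one has, with $P_n\in\Poly_n$ a polynomial of near-best approximation to $f$ in $\Lpab$,
\[
K_{m,\varphi}(f,n^{-m})_{\wab,p}\sim E_n(f)_{\wab,p}+n^{-m}\norm{\wab\varphi^m P_n^{(m)}}{p},
\]
the lower bound on the $K$-functional coming from Bernstein's inequality \ineq{pt} and the upper bound from a Jackson-type estimate. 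Next one writes $f$ as the dyadic telescoping series $f=P_{2^{j_0}}+\sum_{j\ge j_0}Q_j$, $Q_j:=P_{2^{j+1}}-P_{2^j}\in\Poly_{2^{j+1}}$, exactly as in \ineq{teles}; for these blocks \ineq{pt} and \ineq{intineq} yield $\norm{\wab\varphi^{\ell}Q_j^{(\ell)}}{p}\le c\,2^{j\ell}\norm{\wab Q_j}{p}\le c\,2^{j\ell}E_{2^j}(f)_{\wab,p}$ for every $\ell$.

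Feeding the blocks of $P_{2^n}$ into the realization identity bounds $K_{m,\varphi}(f,2^{-nm})_{\wab,p}$ from below by an expression built out of the norms $\norm{\wab Q_j}{p}$, $j_0\le j\le n$, and the arithmetic rearrangement $\sum_{j\le n}2^{mjs}(\,\cdot\,)^s\sim\sum_{k<2^n}k^{ms-1}(\,\cdot\,)^s$ --- a routine variant of \ineq{twoj}, cf.\ also \ineq{app1} --- then casts it in the form of the first displayed inequality. The second displayed inequality would follow from the first by realizing each $K_{m+1,\varphi}(f,2^{-j(m+1)})_{\wab,p}$ on the dyadic blocks $Q_i$, $i<j$, via the same Bernstein bounds, and absorbing the resulting double sum with a discrete Hardy inequality. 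All of this is bookkeeping of exactly the kind already used in Sections~\ref{sec4} and~\ref{sec5}.

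The one non-elementary step --- and the reason the statement holds with the \emph{sharp} exponent $s=\max(p,2)$, and only for $1<p<\infty$ --- is the replacement of the crude $\ell_1$-sum of the block norms $\norm{\wab Q_j}{p}$ by a genuine $\ell_s$-bound $\bigl(\sum_j\norm{\wab Q_j}{p}^{s}\bigr)^{1/s}\le c\norm{\wab f}{p}$. This rests on the weighted Littlewood--Paley inequality for Jacobi expansions, $\norm{\wab(\sum_j|Q_j|^2)^{1/2}}{p}\sim\norm{\wab f}{p}$ for $1<p<\infty$ and $\a,\b\in J_p$, together with standard duality and Minkowski-type manipulations that convert a square-function bound into an $\ell_s$-bound with $s=\max(p,2)$. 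Establishing that Littlewood--Paley inequality for the whole Jacobi range --- through Muckenhoupt-type multiplier theorems and the Littlewood--Paley--Stein $g$-function of the Jacobi heat semigroup --- is the analytic core of \cite{dd} and \cite{ddt}, and I expect it to be by far the hardest part; everything else reduces to realization, the Bernstein inequality \ineq{pt}, dyadic telescoping, and elementary sums.
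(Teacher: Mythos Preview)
Your proposal is correct: the paper gives no proof of this theorem at all, merely citing it from \cite{ddt}*{Theorem~6.2} in the Appendix, and you identify this explicitly at the outset. The sketch you add---realization of $K_{m,\varphi}$ via polynomial approximation and Bernstein's inequality \ineq{pt}, dyadic telescoping as in \ineq{teles}, and the weighted Littlewood--Paley theory of \cite{dd} to obtain the sharp $\ell_s$ exponent---is an accurate outline of the argument in \cite{ddt}, but it goes beyond what the present paper does.
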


\begin{corollary}
For   $1<p<\infty$, $r\in\N_0$, $m \in\N$, $r/2+\a\ge 0$, $r/2+\b\ge 0$ and $f\in\B^r_p(\wab)$, we have
\[
2^{-nm} \left( \sum_{j=j_0}^n 2^{mjs} E_{2^j}(f^{(r)})^s_{\wab\varphi^r,p} \right)^{1/s} \leq C  \omega^{\varphi}_{m,r}(f^{(r)},2^{-n})_{\a,\b, p}
\]
and
\[
2^{-nm} \left( \sum_{j=j_0}^n 2^{mjs}  \omega^{\varphi}_{m+1,r}(f^{(r)},2^{-j})_{\a,\b, p}^s  \right)^{1/s} \leq C \omega^{\varphi}_{m,r}(f^{(r)},2^{-n})_{\a,\b, p},
\]
where $2^{j_0} \geq m$ and $s = \max(p,2)$.
\end{corollary}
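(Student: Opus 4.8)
The plan is to derive the two corollaries from the preceding theorem (\cite{ddt}*{Theorem 6.2}) via the equivalence between the weighted $K$-functional $K_{m,\varphi}(g,t^m)_{\wab,p}$ and the weighted modulus $\omega_{m,0}^\varphi(g,t)_{\a,\b,p}$, applied to $g:=f^{(r)}$ with the shifted parameters $(\a',\b'):=(r/2+\a,r/2+\b)$, and then rewriting everything in terms of $\omega_{m,r}^\varphi(f^{(r)},\cdot)_{\a,\b,p}$. Concretely, since $f\in\B^r_p(\wab)$ we have $g=f^{(r)}\in L_p^{\a',\b'}$ with $\a',\b'\ge0$ (indeed $\a',\b'\in J_p$, as needed to quote \cite{ddt}*{Theorem 6.2}), so the cited theorem gives, for $2^{j_0}\ge m$ and $s=\max(p,2)$,
\[
2^{-nm}\Bigl(\sum_{j=j_0}^n 2^{mjs}E_{2^j}(g)^s_{w_{\a',\b'},p}\Bigr)^{1/s}\le C\,K_{m,\varphi}(g,2^{-nm})_{w_{\a',\b'},p}
\]
and the analogous inequality with $E_{2^j}(g)$ replaced by $K_{m+1,\varphi}(g,2^{-j(m+1)})_{w_{\a',\b'},p}$ on the left.

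Next I would record the two dictionary facts. First, $w_{\a',\b'}=\wab\varphi^r$ by definition of $\varphi$ and of the Jacobi weights, so $E_{2^j}(g)_{w_{\a',\b'},p}=E_{2^j}(f^{(r)})_{\wab\varphi^r,p}$, which matches the left-hand sides of the claimed corollaries. Second, by the characterization of $\omega_{m,0}^\varphi$ via the $K$-functional in the unweighted-parameter setting (this is the $r=0$ instance of the equivalence already used in the paper — cf. \ineq{claim} and \cite{sam}*{(2.2)} together with the $K$-functional realization of $\omega^\varphi_{m}$ for $L_p$ with doubling Jacobi weights), one has
\[
K_{m,\varphi}(g,t^m)_{w_{\a',\b'},p}\sim \omega_{m,0}^\varphi(g,t)_{\a',\b',p}=\omega_{m,r}^\varphi(f^{(r)},t)_{\a,\b,p},\qquad 0<t\le t_0,
\]
where the last equality is just the definition \ineq{wkrdefinition} read with the shifted weight exponents $r/2+\a,r/2+\b$ attached to $f^{(r)}$. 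Substituting $t=2^{-n}$ turns the right-hand side $K_{m,\varphi}(g,2^{-nm})_{w_{\a',\b'},p}$ into $c\,\omega_{m,r}^\varphi(f^{(r)},2^{-n})_{\a,\b,p}$, and doing the same with $m+1$ in place of $m$ turns $K_{m+1,\varphi}(g,2^{-j(m+1)})_{w_{\a',\b'},p}$ into a quantity comparable to $\omega_{m+1,r}^\varphi(f^{(r)},2^{-j})_{\a,\b,p}$; plugging these into the two displayed inequalities above yields the two asserted inequalities of the corollary, after adjusting $C$ and, if necessary, enlarging $j_0$ so that $2^{-j_0}\le t_0$ (absorbing finitely many terms, which only costs a constant). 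The Marchaud-type corollary is handled identically, starting from \cite{dd}*{Theorem 7.5} applied to $g=f^{(r)}$ with exponents $\a',\b'$.

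The only genuine point requiring care — the step I expect to be the main obstacle — is justifying the $K$-functional/modulus equivalence for $g=f^{(r)}$ uniformly over the relevant range of $t$, since $g$ is only assumed to lie in $L_p^{\a',\b'}$ and the equivalence \ineq{claim} in the paper is stated for $0<t\le t_0$ rather than all $t$; this forces the restriction $2^{j_0}\ge m$ (already present) possibly to be strengthened to $2^{-j_0}\le t_0$, and one must check that the finitely many dyadic levels $j$ with $2^{-j}>t_0$ contribute only a bounded multiple of the right-hand side, which follows from monotonicity of $E_n$ and of $\omega_{m,r}^\varphi$ in $n$ and in $t$ respectively together with $\omega_{m,r}^\varphi(f^{(r)},t)_{\a,\b,p}\le c\,\omega_{m,r}^\varphi(f^{(r)},t_0)_{\a,\b,p}$ for $t\ge t_0$. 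Everything else is a substitution of variables and a renaming of constants.
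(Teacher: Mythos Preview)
Your proposal is correct and is precisely the argument the paper has in mind: the corollary is stated without proof in the Appendix because it is obtained from \cite{ddt}*{Theorem 6.2} by applying it to $g=f^{(r)}$ with weight $w_{r/2+\a,\,r/2+\b}=\wab\varphi^r$ and then using the identifications $E_n(g)_{w_{r/2+\a,r/2+\b},p}=E_n(f^{(r)})_{\wab\varphi^r,p}$ and $K_{m,\varphi}(g,t^m)_{w_{r/2+\a,r/2+\b},p}\sim\omega^\varphi_m(g,t)_{\wab\varphi^r,p}\sim\omega^\varphi_{m,r}(f^{(r)},t)_{\a,\b,p}$ (the latter being \ineq{claim}, valid for $0<t\le t_0$). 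Your handling of the $t_0$ restriction via monotonicity and absorption of finitely many dyadic levels is also the standard and correct way to close the argument.
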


\begin{corollary}
For   $1<p<\infty$, $r\in\N_0$, $m \in\N$, $r/2+\a\ge 0$, $r/2+\b\ge 0$ and $f\in\B^r_p(\wab)$, we have
\[
t^m  \left(  \int_t^{1/m}        \frac{\omega^{\varphi}_{m+1,r}(f^{(r)},u)_{\a,\b,p}^s}{u^{ms+1}} \, du \right)^{1/s} \leq C \omega^{\varphi}_{m,r}(f^{(r)},t)_{\a,\b,p}, \quad 0<t\leq 1/m,
\]
where  $s = \max(p,2)$.
\end{corollary}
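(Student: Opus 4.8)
The plan is to derive the integral inequality from the dyadic Jackson-type estimate of the preceding corollary by the standard dyadic-summation argument (exactly as in the passage from the discrete to the integral form in the unweighted theory). Fix $0<t\le 1/m$, choose $j_0\in\N$ with $2^{j_0-1}<m\le 2^{j_0}$ (so the preceding corollary applies with this base), and let $n\in\N$ be determined by $2^{-n}\le t<2^{-n+1}$; since $t\le 1/m<2^{-j_0+1}$ we automatically get $n\ge j_0$. Then $t^m<2^m2^{-nm}$, and by monotonicity of $\omega^{\varphi}_{m,r}(f^{(r)},\cdot)_{\a,\b,p}$ in its argument, $\omega^{\varphi}_{m,r}(f^{(r)},2^{-n})_{\a,\b,p}\le\omega^{\varphi}_{m,r}(f^{(r)},t)_{\a,\b,p}$. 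Since the second inequality of the preceding corollary gives
\[
\sum_{j=j_0}^n 2^{mjs}\,\omega^{\varphi}_{m+1,r}(f^{(r)},2^{-j})_{\a,\b,p}^s\le c\,2^{nms}\,\omega^{\varphi}_{m,r}(f^{(r)},2^{-n})_{\a,\b,p}^s\le c\,2^{nms}\,\omega^{\varphi}_{m,r}(f^{(r)},t)_{\a,\b,p}^s,
\]
it suffices to prove
\[
\int_t^{1/m}\frac{\omega^{\varphi}_{m+1,r}(f^{(r)},u)_{\a,\b,p}^s}{u^{ms+1}}\,du\le c\sum_{j=j_0}^n 2^{mjs}\,\omega^{\varphi}_{m+1,r}(f^{(r)},2^{-j})_{\a,\b,p}^s,
\]
after which combining the three displays yields $t^m\bigl(\int_t^{1/m}\cdots\bigr)^{1/s}\le c\,\omega^{\varphi}_{m,r}(f^{(r)},t)_{\a,\b,p}$, which is the assertion.

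To prove the last display, decompose the interval of integration dyadically. Because $2^{-n}\le t$ and $1/m<2^{-j_0+1}$, we have $[t,1/m]\subseteq\bigcup_{j=j_0-1}^{n}[2^{-j-1},2^{-j})$. On a block $[2^{-j-1},2^{-j})$ the function $\omega^{\varphi}_{m+1,r}(f^{(r)},\cdot)_{\a,\b,p}$ is nondecreasing, hence bounded there by $\omega^{\varphi}_{m+1,r}(f^{(r)},2^{-j})_{\a,\b,p}$, and $u^{-ms-1}\le 2^{(j+1)(ms+1)}$; multiplying by the block length $2^{-j-1}$ gives
\[
\int_{2^{-j-1}}^{2^{-j}}\frac{\omega^{\varphi}_{m+1,r}(f^{(r)},u)_{\a,\b,p}^s}{u^{ms+1}}\,du\le 2^{ms}\,2^{mjs}\,\omega^{\varphi}_{m+1,r}(f^{(r)},2^{-j})_{\a,\b,p}^s .
\]
Summing over $j=j_0-1,\dots,n$ bounds the integral by $c\sum_{j=j_0}^{n}2^{mjs}\omega^{\varphi}_{m+1,r}(f^{(r)},2^{-j})_{\a,\b,p}^s$ plus the single extra term $c\,2^{mj_0s}\,\omega^{\varphi}_{m+1,r}(f^{(r)},2^{-j_0+1})_{\a,\b,p}^s$ coming from $j=j_0-1$. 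This extra term is absorbed into the $j=j_0$ term of the sum by the elementary doubling property $\omega^{\varphi}_{m+1,r}(f^{(r)},2\tau)_{\a,\b,p}\le c\,\omega^{\varphi}_{m+1,r}(f^{(r)},\tau)_{\a,\b,p}$ of the weighted moduli (obtained, as in the classical case, by writing the $(m+1)$st difference with step $2\tau\varphi$ as a linear combination of such differences with step $\tau\varphi$ and using that $\varphi$ is essentially constant on intervals of length comparable to $\tau\varphi$), applied with $\tau=2^{-j_0}$.

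The only genuinely delicate point is precisely this endpoint bookkeeping. The preceding corollary forces the dyadic base to satisfy $2^{j_0}\ge m$, whereas the natural cover of $[t,1/m]$ necessarily reaches out to $2^{-j_0+1}>1/m$, i.e.\ it involves the value of $\omega^{\varphi}_{m+1,r}(f^{(r)},\cdot)$ at a point slightly outside the range $(0,1/m]$ to which the corollary pertains; once that one block is subsumed via the doubling property as above, everything else is the routine comparison of a sum with an integral over dyadic blocks, and it is valid for every $s>0$, so no additional care about sub- or super-additivity of $\tau\mapsto \tau^s$ is needed. (For $0<t\le 1/m$ with $t$ very close to $1/m$ the sum degenerates to one or two terms and the integral is correspondingly small, so no separate argument is required.)
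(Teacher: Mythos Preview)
Your argument is correct and is exactly the standard passage from the dyadic Jackson-type estimate of the preceding corollary to its integral form; the paper states this corollary in the Appendix without proof, and your derivation is the intended one. The one point worth tightening is the justification of the doubling property $\omega^{\varphi}_{m+1,r}(f^{(r)},2\tau)_{\a,\b,p}\le c\,\omega^{\varphi}_{m+1,r}(f^{(r)},\tau)_{\a,\b,p}$: rather than arguing from scratch with differences and the near-constancy of $\varphi$, you can invoke the equivalence \ineq{claim} with the Ditzian--Totik moduli $\omega^{m+1}_\varphi(f^{(r)},\cdot)_{w_{\a,\b}\varphi^r,p}$, for which $\omega^{k}_\varphi(g,\lambda t)_{w,p}\le C(\lambda)\,\omega^{k}_\varphi(g,t)_{w,p}$ is a well-known property; this immediately absorbs the single $j=j_0-1$ block into the $j=j_0$ term.
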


\begin{bibsection}
\begin{biblist}

\bib{b}{article}{
   author={Brudnyi, Yu. A.},
   title={On a theorem of local approximations},
   language={Russian},
   journal={Kazan. Gos. Univ. U\u cen. Zap},
   volume={12},
   date={1964},
   number={6},
   pages={43--49},
}

\bib{dd}{article}{
   author={Dai, F.},
   author={Ditzian, Z.},
   title={Littlewood-Paley theory and a sharp Marchaud inequality},
   journal={Acta Sci. Math. (Szeged)},
   volume={71},
   date={2005},
   number={1-2},
   pages={65--90},
}

\bib{ddt}{article}{
   author={Dai, F.},
   author={Ditzian, Z.},
   author={Tikhonov, S.},
   title={Sharp Jackson inequalities},
   journal={J. Approx. Theory},
   volume={151},
   date={2008},
   number={1},
   pages={86--112},
   issn={0021-9045},
}

\bib{dl}{book}{
author={DeVore, R. A.},
author={Lorentz, G. G.},
title={Constructive approximation},
series={A Series of Comprehensive Studies in Mathematics},
volume={303},
publisher={Springer-Verlag},
place={New York},
date={1993},
pages={x+227},
isbn={0-387-50627-6},
}

\bib{dt}{book}{
author={Ditzian, Z.},
author={Totik, V.},
title={Moduli of smoothness},
series={Springer Series in Computational Mathematics},
volume={9},
publisher={Springer-Verlag},
place={New York},
date={1987},
pages={x+227},
isbn={0-387-96536-X},
}

\bib{sh}{book}{
author={Dzyadyk, V. K.},
author={Shevchuk, I. A.},
title={Theory of Uniform Approximation of Functions by Polynomials},
publisher={Walter de Gruyter},
place={Berlin},
date={2008},
pages={xv+480},
}

\bib{kha}{article}{
   author={Khalilova, B. A.},
   title={Certain estimates for polynomials},
   language={in Russian, with Azerbaijani and English summaries},
   journal={Izv. Akad. Nauk Azerba\u\i d\v zan. SSR Ser. Fiz.-Tehn. Mat. Nauk},
   date={1974},
   number={2},
   pages={46--55},
}

\bib{k-at8}{article}{
   author={Kopotun, K. A.},
   title={On $K$-monotone polynomial and spline approximation in $L_p$,
   $0<p<\infty$ (quasi)norm},
   conference={
      title={Approximation theory VIII, Vol.\ 1},
      address={College Station, TX},
      date={1995},
   },
   book={
      series={Ser. Approx. Decompos.},
      volume={6},
      publisher={World Sci. Publ., River Edge, NJ},
   },
   date={1995},
   pages={295--302},
}


\bib{k-sing}{article}{
   author={Kopotun, K. A.},
   title={Polynomial approximation with doubling weights having finitely
   many zeros and singularities},
   journal={J. Approx. Theory},
   volume={198},
   date={2015},
   pages={24--62},
   issn={0021-9045},
}

\bib{kls-umzh}{article}{
   author={Kopotun, K. A.},
   author={Leviatan, D.},
   author={Shevchuk, I. A.},
   title={Are the degrees of the best (co)convex and unconstrained
   polynomial approximations the same? II},
   journal={Ukra\"\i n. Mat. Zh.},
   volume={62},
   date={2010},
   number={3},
   pages={369--386},
   issn={1027-3190},
   translation={
      journal={Ukrainian Math. J.},
      volume={62},
      date={2010},
      number={3},
      pages={420--440},
      issn={0041-5995},
   },
}

\bib{kls1}{article}{
author={Kopotun, K. A.},
author={Leviatan, D.},
author={Shevchuk, I. A.},
title={New moduli of smoothness},
journal={Publ. l'Inst. Math. Serbian Academy of Sciences and Arts of Belgrade},
volume={96(110)},
date={2014},
 pages={169--180},
 }

\bib{kls}{article}{
author={Kopotun, K. A.},
author={Leviatan, D.},
author={Shevchuk, I. A.},
title={New moduli of smoothness: Weighted DT moduli revisited and applied},
journal={Constr. Approx. },
volume={42},
date={2015},
 pages={ 129--159},
}

\bib{sam}{article}{
author={Kopotun, K. A.},
author={Leviatan, D.},
author={Shevchuk, I. A.},
title={On moduli of smoothness with Jacobi weights},
journal={Ukrainian Math. J.},
}

\bib{ky}{article}{
   author={Ky, N. X.},
   title={On approximation of functions by polynomials with weight},
   journal={Acta Math. Hungar.},
   volume={59},
   date={1992},
   number={1-2},
   pages={49--58},
}

\bib{lr}{article}{
author={Luther, U.},
author={Russo, M. G.},
title={Boundedness of the Hilbert transformation in some weighted Besov type spaces},
journal={Integral Equations Operator Theory},
volume={36},
date={2000},
number={2},
pages={220--240},
issn={0378-620X},
}

\bib{pp}{book}{
   author={Petrushev, P. P.},
   author={Popov, V. A.},
   title={Rational approximation of real functions},
   series={Encyclopedia of Mathematics and its Applications},
   volume={28},
   publisher={Cambridge University Press, Cambridge},
   date={1987},
   pages={xii+371},
   isbn={0-521-33107-2},
}

\bib{pot}{article}{
author={Potapov, M. K.},
title={Some inequalities for polynomials and their derivatives},
journal={(in Russian), Vestnik Moscow Univ.},
volume={2},
date={1960},
pages={10--19}
}

\bib{st}{article}{
author={Storozhenko, E. A.},
title={On approximation by algebraic polynomials of functions in the class $L_p$, $0<p<1$},
language={in Russian},
journal={Izv. Akad. Nauk USSR},
volume={41},
date={1977},
pages={652--662}
}

\bib{w}{article}{
author={Whitney, H.},
title={On functions with bounded $n^{th}$ differences},
journal={J. Math. Pure et Appl.},
volume={36},
date={1957},
pages={67--95}
}

\end{biblist}
\end{bibsection}

\end{document}